\font\sn = cmssi8 scaled \magstep0
\newif\ifdraft\drafttrue
\newcommand\name[1]{\label{#1}{\ifdraft{\sn [#1]}\else\ignorespaces\fi}}
\newcommand\eq[2]{{\ifdraft{\ \tt [#1]}\else\ignorespaces\fi}\begin{equation}\label{#1}{#2}\end{equation}}
\newcommand {\equ}[1]{\eqref{#1}}
\newcommand{\Q}{{\mathbb {Q}}}
\newcommand{\R}{{\mathbb{R}}}
\newcommand{\N}{{\mathbb{N}}}
\newcommand{\GL}{\operatorname{GL}}
\newcommand{\SL}{\operatorname{SL}}
\newcommand {\ignore}[1]  {}
\newcommand{\sm}{\smallsetminus}
\newcommand{\Aff}{{\mathrm{Aff}}}
\title{Parking garages with optimal dynamics}
\author{Meital Cohen}
\address{Ben Gurion University, Be'er Sheva, Israel 84105
{\tt comei@bgu.ac.il}}
\author{Barak Weiss}
\address{Ben Gurion University, Be'er Sheva, Israel 84105
{\tt barakw@math.bgu.ac.il}}
\newtheorem{thm}{Theorem}[section]
\newtheorem*{Thm}{Theorem}
\theoremstyle{definition}
\newtheorem{definition}[thm]{Definition}
\newtheorem*{Definition}{Definition}
\theoremstyle{plain}
\newtheorem{lem}[thm]{Lemma}
\newtheorem{prop}[thm]{Proposition}
\newtheorem{cor}[thm]{Corollary}
\newtheorem{remark}[thm]{Remark}
\newtheorem*{Remark}{Remark}
\theoremstyle{remark}
\begin{document}

\maketitle
\begin{abstract}
We construct generalized polygons (`parking garages') in which the
billiard flow satisfies the Veech dichotomy, although the associated
translation surface obtained from the Zemlyakov-Katok unfolding is not
a lattice surface. We also explain the difficulties in constructing
a genuine polygon with these properties. 
\end{abstract}

\section{Introduction and Statement of results}
A {\em parking garage} is an immersion $h: N \to \R^2$, where $N$ is a
two dimensional compact connected manifold 
with boundary, and $h(\partial N)$ is
a finite union of linear segments. A parking garage is called {\em
  rational} if the group generated by the linear 
parts of the reflections in the boundary segments is finite. 
If $h$ is
actually an embedding, the 
parking garage is a polygon; thus polygons form a subset of parking
garages, and rationals polygons (i.e. polygons all of whose angles are
rational multiples of $\pi$) form a subset of rational parking garages.

The dynamics of the billiard flow in a rational polygon has been intensively
studied for over a century; see \cite{FK} for an early example, 
and \cite{DeMarco, MT, Vorobets, Zorich} for recent surveys. The
definition of the 
billiard flow on a polygon readily extends to a parking garage: 
on the interior of $N$ the billiard flow is the geodesic flow on
the unit tangent bundle of $N$ (with respect to the pullback of the
Euclidean metric) and at the boundary, the flow is
defined by elastic reflection (angle of incidence equals the angle of
return). The flow is undefined at the finitely many points of $N$ which
map to `corners', i.e. endpoints of boundary segments, and hence at the countable
union of codimension 1 submanifolds corresponding to points in the
unit tangent bundle for which the corresponding geodesics eventually
arrive at corners in positive or negative time. Since the direction of
motion of a trajectory changes at a boundary segment via a reflection in its
side, for rational parking garages, only finitely many directions of
motion are assumed. In other words, the phase space of the billiard
flow decomposes into invariant two-dimensional subsets corresponding
to fixing the directions of motion.

Veech \cite{Veech - dichotomy} discovered that the billiard flow in
some special polygons exhibits a striking
dichotomy. Namely he found polygons for which, in any initial
direction, the flow is either 
{\em completely periodic} (all orbits are periodic), or {\em uniquely
  ergodic} (all orbits are equidistributed). Following McMullen
we will say that a polygon with these properties has
     {\em optimal dynamics}. We briefly summarize Veech's strategy of
     proof. A standard unfolding construction usually attributed to
     Zemlyakov and Katok \cite{ZK}\footnote{but dating back at least to Fox
     and Kershner \cite{FK}.}, 
     associates to any rational polygon $\mathcal{P}$ a translation
     surface $M_{\mathcal{P}}$,
such that the billiard flow on $\mathcal{P}$ is
     essentially equivalent to the straightline flow on
     $M_{\mathcal{P}}$. Associated with any translation surface $M$ is a
     Fuchsian group $\Gamma_M$, now known as the {\em Veech group} of
     $M$, which is typically trivial. Veech found 
$M$ and $\mathcal{P}$ for which this group is a lattice in
     $\SL_2(\R)$. We will call these 
     {\em lattice
       surfaces} and {\em lattice polygons} respectively. Veech investigated the 
     $\SL_2(\R)$-action on the moduli space of translation surfaces,
     and building on earlier work of Masur, showed that lattice
     surfaces have optimal dynamics. From this it follows that {\em
       lattice polygons have optimal dynamics.}

This chain of reasoning remains valid if one starts with a parking garage
instead of a polygon; namely, the unfolding construction
associates a translation surface to a parking garage, and one may
define a lattice parking garage in an analogous way. The arguments of
Veech then show that the billiard flow in a lattice parking garage has
optimal dynamics. This generalization is not vacuous: lattice parking
garages, which are not polygons, were recently discovered by Bouw and
M\"oller \cite{BM}. The term `parking garage' was coined by M\"oller.

A natural question is whether Veech's
result admits a converse, i.e. whether non-lattice polygons or parking
garages may also
have optimal dynamics. In \cite{SW}, Smillie and the second-named
author showed that there are non-lattice translation surfaces 
which have optimal dynamics. However translation surfaces arising from
billiards form a set of measure
zero in the moduli space of translation surfaces, and it was not clear
whether the examples of \cite{SW} arise from polygons or parking
garages. In this paper we show:

\begin{thm}\name{thm: intro}
There are non-lattice parking garages with optimal dynamics. 

\end{thm}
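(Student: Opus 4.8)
The plan is to realize a non-lattice translation surface with optimal dynamics, of the kind produced in \cite{SW}, as the Zemlyakov--Katok unfolding $M_{\til P}$ of a suitable \emph{rational parking garage} $\til P$. The guiding principle is that being an immersion rather than an embedding lets $\til P$ be a multi-floor garage lying over an ordinary lattice polygon, so that $M_{\til P}$ becomes a finite branched translation cover of a lattice surface. The extra floors, joined by \emph{spiral ramps} (interior cone points of angle a multiple of $2\pi$), supply exactly the combinatorial freedom---unavailable to a genuine polygon, whose unfolding cannot branch over an interior regular point---to build a cover that destroys the lattice property while preserving the dichotomy.

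Concretely, I would begin with a lattice polygon $P$ whose unfolding $M_0=M_P$ is a lattice surface, so that $M_0$ and $P$ have optimal dynamics by Veech's theorem. Since the affine automorphism group of $M_0$ is countable while $M_0$ is uncountable, I may fix a point $z\in M_0$ (the image of an interior point of $P$) whose affine orbit is infinite. I then build $\til P$ from $d$ copies of $P$, glued along their boundary reflections according to a chosen monodromy $\rho\colon \pi_1(M_0\smallsetminus\{z,\dots\})\to S_d$, placing spiral ramps over the chosen point(s) to realize the branching. The linear parts of the reflections are the same finite group as for $P$, so $\til P$ is again rational; by construction $h$ is an immersion but not an embedding, and $M_{\til P}$ is the connected degree-$d$ translation cover of $M_0$ branched over the affine orbit of $z$ with monodromy $\rho$.

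Two points then remain. First, $M_{\til P}$ is not a lattice surface: any affine automorphism of $M_{\til P}$ with derivative $A$ descends to one of $M_0$ with the same derivative carrying the branch locus to itself, hence carrying $\{z\}$ to itself; since $z$ is non-periodic, the set of such $A$ has infinite index in $\Gamma_{M_0}$, so $\Gamma_{M_{\til P}}$ is not a lattice. Second, $M_{\til P}$ should have optimal dynamics. In a direction $\theta$ that is completely periodic on $M_0$, the cylinder decomposition pulls back (a branched cover of a cylinder, branched at interior points, is again a finite union of cylinders), so $\theta$ is completely periodic on $M_{\til P}$. The delicate case is a direction $\theta$ that is uniquely ergodic on $M_0$: one must show the straight-line flow on $M_{\til P}$ in direction $\theta$ stays uniquely ergodic, i.e. that the finite cover creates no minimal, non-uniquely-ergodic directions.

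This last point is the heart of the matter and the main obstacle; it is precisely the phenomenon controlled in \cite{SW}, and I would import their criterion, choosing $P$, the point $z$, the degree $d$ and the monodromy $\rho$ so that the cover lies in the class for which unique ergodicity of the lifted directions is established (controlling the Teichm\"uller geodesic via Masur's criterion and excluding extra transverse invariant measures on the cover). Granting this, every direction on $M_{\til P}$ is either completely periodic or uniquely ergodic, so $\til P$ is a non-lattice parking garage with optimal dynamics, which proves the theorem. The remaining technicalities---connectivity of the cover, compatibility of $\rho$ with the surface relation (which may require branching over two ramp points rather than one), and verifying that $\til P$ is a bona fide rational parking garage---are routine given the flexibility of the multi-floor construction.
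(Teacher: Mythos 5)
There is a genuine gap, and it sits at the core of your construction: \emph{spiral ramps are not parking garages}. A parking garage is by definition an immersion $h\colon N\to\R^2$, so every \emph{interior} point of $N$ has a neighborhood mapped homeomorphically onto an open subset of the plane; an interior cone point of angle $2\pi k$ with $k\geq 2$ is precisely a branch point of $h$ and is excluded by the immersion hypothesis. The multi-floor freedom a garage really buys is different: global non-injectivity of $h$, and \emph{boundary} vertices whose angle may exceed $2\pi$ (in the paper's example, the vertex of angle $3(n-2)/n$) --- exactly the freedom denied to polygons, where each vertex angle is at most $2\pi$ (this bound is what drives the obstruction in \S\ref{section: meital thesis}). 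Consequently the unfolding $M_{\til{P}}$ can branch over $M_P$ only at points in the $G_P$-orbit of images of vertices of $P$ (Proposition \ref{prop: precisely}(2)), and whether branching occurs there is dictated by the divisibility criterion of Proposition \ref{prop: precisely}(4): an angle $m/n$ vertex appearing with angles $k_im/n$ branches iff some $k_i\nmid n$. You can neither branch over the image of an interior point of $P$ nor prescribe an arbitrary monodromy $\rho\colon\pi_1(M_0\sm\{z,\dots\})\to S_d$; only very constrained covers are realizable by garages. Relatedly, your cardinality argument for the aperiodic point is insufficient even on its own terms: countability of $\Aff(M_0)$ does not rule out every point having a finite orbit (a countable group can have all orbits finite), so one needs actual results on periodic points --- and in any case the point must be a \emph{vertex image}, which is why the paper needs, and cites from \cite{HS}, the aperiodicity of the centers of the two $n$-gons in the double $n$-gon, these centers being the unfoldings of the base vertices of the triangle.

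The rest of your outline is the right skeleton and matches the paper's machinery: a single aperiodic branch point over a lattice surface gives non-lattice via the stabilizer/commensurability argument (Proposition \ref{prop: basics}(c)), periodic directions pull back, and the Smillie--Weiss theorem handles unique ergodicity (Proposition \ref{prop: basics}(b), assembled in Corollary \ref{cor: suitable cover}). But note your fallback ``branching over two ramp points rather than one'' is not available: the \cite{SW} input you invoke requires a \emph{unique} branch point, so that escape hatch forfeits optimal dynamics. What actually closes the gap --- and is the real content of the theorem --- is an explicit garage meeting the vertex-branching constraints at exactly one $G_{\mathcal{P}}$-orbit: take $\mathcal{P}$ the $(1/n,1/n,(n-2)/n)$ triangle with $n\geq 9$ odd and divisible by $3$, and glue four reflected copies into the garage $\mathcal{Q}$ of Figure \ref{parking}, with vertex angles $1/n,\,2/n,\,3/n,\,(n-2)/n,\,2/n,\,3(n-2)/n$. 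Then Proposition \ref{prop: precisely}(4) shows the degree-$4$ cover $M_{\mathcal{Q}}\to M_{\mathcal{P}}$ branches exactly at the $k_i=2$ vertices (since $2\nmid n$, while $k_i\in\{1,3\}$ divide $n$ because $3\mid n$), and one checks both lie over the \emph{same} $n$-gon center, aperiodic by \cite{HS}. Without an explicit configuration of this kind, your proposal asserts the existence of the needed cover rather than proving it.
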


An example of such a parking garage is shown in Figure \ref{parking}. 
\begin{figure}[h!]
\begin{center}
\includegraphics[scale=0.7]{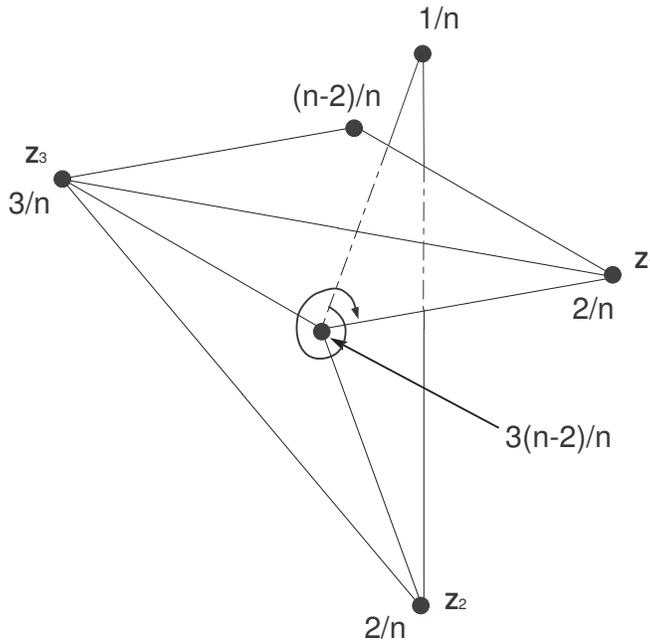}
\caption{
An non-lattice parking garage with optimal dynamics.}
\label{parking}			
\end{center}
\end{figure}

Veech's work shows that for lattice polygons, the directions in which
all orbits are periodic are precisely those containing a {\em saddle connection},
i.e. a billiard path
connecting corners of the 
polygon which unfold to singularities of the corresponding
surface. Following Cheung, Hubert and Masur \cite{CHM}, if a 
polygon $\mathcal{P}$ has optimal dynamics, and the periodic directions
coincide with the directions of saddle connections, we will say
that $\mathcal{P}$ satisfies {\em 
  strict ergodicity and topological dichotomy}. 
It is not clear to us whether our example satisfies this
stronger property. As we explain in Remark \ref{remark: connection points}
below, this would follow if it were 
known that the center of the regular $n$-gon is a `connection point' in
the sense of Gutkin, Hubert and Schmidt
\cite{GHS} for some $n$ which is an odd multiple of 3.  

Veech also showed that for a lattice polygon $\mathcal{P}$, the number
$N_{\mathcal{P}}(T)$ of periodic strips on $\mathcal{P}$ of
length at most $T$ satisfies a quadratic growth estimate of the form
$N_{\mathcal{P}}(T) \sim cT^2$ for a positive constant $c$. As we
explain in Remark \ref{remark: quadratic growth}, our examples
also satisfy such a quadratic growth estimate.

It remains an open question whether there is a genuine polygon which
has optimal dynamics and is not a lattice polygon. Although our
results make it seem likely that such a polygon exists, in her M.Sc. thesis
\cite{Meital thesis}, the first-named author obtained severe 
restrictions on such a polygon. In particular she showed that there are
no such polygons which may be constructed from any of the currently
known lattice examples 
via the covering construction as in \cite{Vorobets, SW}. We explain
these results and prove a representative special case in \S \ref{section: meital thesis}.

\subsection{Acknowledgements}
 We are grateful to Yitwah Cheung and
Patrick Hooper for helpful discussions. 
This research was supported by the Israel Science Foundation and the
Binational Science Foundation. 

\section{Preliminaries}
In this section we cite some results which we will need, and deduce
simple consequences. For the sake of brevity we will refer the reader
to \cite{MT, Zorich, SW} for definitions of translation surfaces.   

Suppose $S_1, S_2$ are compact orientable surfaces and $\pi: S_2 \to
S_1$ is a branched cover. That is, $\pi$ is continuous and surjective,
and there is a finite $\Sigma_1 \subset
S_1$, called the set of {\em branch points}, such that for $\Sigma_2 =
\pi^{-1}(\Sigma_1)$, the restriction of $\pi$ to $S_2 \sm \Sigma_2$ is
a covering map of finite degree $d$, and for any $p \in \Sigma_1$, 
$\# \pi^{-1}(p) < d$. A {\em ramification point} is a point $q \in
\Sigma_2$ for which there 
is a neighborhood $\mathcal{U}$ such that $\{q\} = \mathcal{U} \cap
\pi^{-1}(\pi(q))$ and for all $u \in \mathcal{U}
\sm \{q\}$, $\# \pi^{-1}(\pi(u)) \geq 2.$ 

If $M_1, M_2$ are translation surfaces, a {\em translation map} is a
surjective map $M_2 \to M_1$ which is a 
translation in charts. It is a branched cover. In contrast to other
authors (cf. \cite{GHS, Vorobets}), we do not require that the set of
branch points be distinct from the singularities of $M_1$, or that
they be marked. It is clear that the ramification points of the cover
are singularities on $M_2$. 

If $M$ is a lattice surface, a point $p \in M$ is called {\em
  periodic} if its orbit under the group of affine automorphisms of
$M$ is finite. A point $p \in M$ is called a {\em connection point} if
any segment joining a singularity with $p$ is contained in a saddle
connection (i.e. a segment joining singularities) on $M$. The
following proposition summarizes results discussed in \cite{FK, MT, SW, HS}:  
\begin{prop}\name{prop: basics}
\item[(a)]
A non-minimal direction on a translation surface contains a saddle
connection. 

\item[(b)]
If $M_1$ is a lattice surface, $M_2 \to M_1$ is translation map with a
unique branch point, then any minimal direction on $M_2$ is uniquely ergodic.

\item[(c)]
If $M_2 \to M_1$ is a translation map such that
$M_1$ is a lattice surface and the set of branch points contains a
non-periodic point, then $M_2$ is 
not a lattice surface. 

\item[(d)]
If $M_2 \to M_1$ is a translation map such that
$M_1$ is a lattice surface and all branch points are connection
points, then any saddle connection direction on $M_2$ is periodic. 
\end{prop}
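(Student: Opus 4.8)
The plan is to prove Proposition \ref{prop: basics} part by part, treating (a) as background and focusing the real work on (b), (c), (d), each of which concerns a translation map $\pi: M_2 \to M_1$ with $M_1$ a lattice surface. The guiding principle throughout is that dynamical and geometric features upstairs on $M_2$ can be read off from the corresponding features downstairs on $M_1$, using the fact that a translation map is a translation in charts and hence carries straightline flow to straightline flow, saddle connections to saddle connections (or concatenations thereof), and singularities to singularities.

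For part (b), I would argue by contraposition: suppose a direction $\theta$ on $M_2$ is minimal but not uniquely ergodic. Pushing forward, the direction $\theta$ on the lattice surface $M_1$ is, by Veech dichotomy (optimal dynamics of lattice surfaces, which I may invoke), either completely periodic or uniquely ergodic. If $\theta$ were periodic on $M_1$, then by part (a) applied to $M_2$ together with the structure of the cover the direction would fail to be minimal upstairs; so $\theta$ must be uniquely ergodic on $M_1$. The heart of the matter is then to lift unique ergodicity through a branched cover with a \emph{single} branch point: the flow on $M_2$ in direction $\theta$ is a finite extension of the uniquely ergodic flow on $M_1$, and with only one branch point the monodromy/cocycle obstruction to unique ergodicity of the extension cannot occur, forcing unique ergodicity upstairs. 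This is the step I expect to be the main obstacle, since controlling the ergodic decomposition of a finite-to-one extension generally requires more than one invariant measure to be ruled out; the hypothesis of a single branch point is exactly what makes it go through, and I would lean on the cited results in \cite{SW, HS} to supply the mechanism.

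For part (c), I would again use contraposition, assuming $M_2$ is itself a lattice surface and deriving that every branch point must be periodic. The key input is that a translation map between two lattice surfaces is compatible with the affine automorphism groups: the Veech group $\Gamma_{M_2}$ and $\Gamma_{M_1}$ are commensurable, and an affine automorphism of $M_2$ descends (after passing to a finite-index subgroup) to one of $M_1$ permuting the fibers of $\pi$. Since the branch points form a finite $\pi$-invariant set downstairs, and the relevant affine group acts on $M_1$ with finite orbit on this set, each branch point has finite orbit under the group of affine automorphisms of $M_1$ and is therefore periodic by definition. Contrapositively, a non-periodic branch point forces $M_2$ out of the lattice class.

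Finally, for part (d), let $\sigma$ be a saddle connection on $M_2$ in direction $\theta$; I want to show $\theta$ is a periodic direction on $M_2$. Projecting $\sigma$ to $M_1$ gives a path in direction $\theta$ joining points of $\pi(\Sigma_2)$. Its endpoints are either singularities of $M_1$ or branch points, and by hypothesis every branch point is a connection point; hence by the definition of connection point the projected segment lies on a saddle connection of $M_1$, so $\theta$ is a saddle connection direction downstairs. By part (a) (in its contrapositive form) together with the Veech dichotomy on the lattice surface $M_1$, a saddle connection direction on $M_1$ is not minimal and is in fact completely periodic. It remains to lift complete periodicity through the cover: since $\pi$ is a finite branched cover that is a translation in charts, the preimage of a cylinder decomposition in direction $\theta$ is again a decomposition of $M_2$ into cylinders and finitely many saddle connections in direction $\theta$, so $\theta$ is periodic on $M_2$. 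The only delicate point here is verifying that no orbit upstairs escapes periodicity because of the branching, which is precisely where the connection-point hypothesis guarantees that the singularities of $M_2$ sit over saddle connections and the decomposition lifts cleanly.
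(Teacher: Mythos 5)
A preliminary remark: the paper never proves Proposition \ref{prop: basics} --- it is stated explicitly as a summary of results from \cite{FK, MT, SW, HS} --- so your sketches have to be measured against the cited literature and against the way the proposition is used in Corollary \ref{cor: suitable cover}. At that level, parts (a) and (c) of your proposal are acceptable: (a) is the standard fact you treat as background, and your outline of (c) is the standard argument, provided the descent/commensurability theorem is cited rather than asserted (Vorobets \cite{Vorobets}, Gutkin--Judge); the precise input is that $\Gamma_{M_2}$ is commensurable with the \emph{stabilizer in $\Gamma_{M_1}$ of the branch locus}, whence if $M_2$ is a lattice surface that stabilizer has finite covolume, hence finite index in $\Gamma_{M_1}$, and the finite branch locus has finite $\Aff(M_1)$-orbit. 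The genuine problems are in (b) and (d). For (b), your reduction to the case where the direction is uniquely ergodic downstairs is fine, but the proposed mechanism --- that the flow on $M_2$ is a finite extension of a uniquely ergodic flow and ``with only one branch point the monodromy/cocycle obstruction cannot occur'' --- is not an argument and is false as a soft principle: finite extensions of uniquely ergodic flows need not be uniquely ergodic, and nothing in the fiberwise picture distinguishes one branch point from two, whereas with two branch points the statement actually fails (double covers of tori branched over two points are the classical source of minimal, non-uniquely ergodic directions). In \cite{SW} the single-branch-point hypothesis enters through moduli-space geometry --- Masur's criterion converts non-unique ergodicity into divergence of the Teichm\"uller geodesic, which is then excluded by a short-saddle-connection analysis over the closed $\SL_2(\R)$-orbit of the lattice surface $M_1$ --- so your part (b) is, in substance, a citation of \cite{SW} (which is also the paper's treatment), and the heuristic you attach to it should be struck.

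Part (d) has a concrete gap: a saddle connection on $M_2$ may project to a segment joining a branch point to \emph{itself} or to \emph{another branch point}, and the definition of connection point (every segment joining a \emph{singularity} with $p$ lies on a saddle connection) says nothing about such segments, so your claim that ``the projected segment lies on a saddle connection of $M_1$'' is unjustified in precisely these cases. For a loop from $p$ to itself the repair is the one the paper itself uses when proving Corollary \ref{cor: suitable cover}: the projection is a closed regular geodesic, hence lies in a cylinder, and on the lattice surface $M_1$ that direction is periodic --- the connection-point hypothesis plays no role there. A straight segment between two \emph{distinct} branch points is the genuinely delicate configuration and needs a separate argument; this is one reason the sharp statements in the literature are formulated for a single branch point (cf.\ \cite{CHM}). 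Finally, your step ``by part (a) in its contrapositive form \ldots a saddle connection direction on $M_1$ is not minimal'' is a logical slip: the contrapositive of (a) says that a direction \emph{without} saddle connections is minimal, not the converse, and in general a minimal direction can perfectly well contain a saddle connection; that saddle connection directions on a lattice surface are completely periodic is the strong form of Veech's theorem \cite{Veech - dichotomy} and must be invoked as such. Your final lifting step (cylinder decompositions pull back under a translation map) is correct.
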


\begin{cor}\name{cor: suitable cover}
Let $M_2 \to  M_1$ be a translation map such that $M_1$ is a lattice
surface with a unique branch point $p$. Then:
\begin{enumerate}
\item $M_2$ has
optimal dynamics. 

\item
If $p$ is a connection point then $M_2$ satisfied
topological dichotomy and strict ergodicity. 
\item
If $p$ is not a periodic point then $M_2$ is not a
lattice surface. 

\end{enumerate}
\end{cor}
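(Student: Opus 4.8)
The plan is to deduce Corollary \ref{cor: suitable cover} directly from Proposition \ref{prop: basics} by specializing to the case of a single branch point. The corollary has three parts, and each should follow by combining the appropriate parts of the proposition, so the main work is bookkeeping rather than new mathematics. Throughout I would fix the translation map $\pi: M_2 \to M_1$ with $M_1$ a lattice surface and a unique branch point $p$.

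For part (1), the goal is optimal dynamics, meaning every direction is either completely periodic or uniquely ergodic. The natural dichotomy to exploit is minimal versus non-minimal. First I would argue that every minimal direction on $M_2$ is in fact uniquely ergodic: since there is a unique branch point, part (b) of the proposition applies verbatim and gives unique ergodicity in every minimal direction. It then remains to show that every non-minimal direction is completely periodic. Here I would invoke part (a): a non-minimal direction contains a saddle connection, so I must upgrade ``contains a saddle connection'' to ``completely periodic''. This is the step I expect to be the main obstacle, because parts (a)--(d) as stated do not immediately say that a saddle connection direction on $M_2$ is periodic unless one knows the branch point is a connection point. I would resolve this by passing to the base: a direction that is non-minimal upstairs projects to a non-minimal direction downstairs on the lattice surface $M_1$, and on a lattice surface the Veech dichotomy guarantees that every non-minimal direction is completely periodic. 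Complete periodicity is inherited by the cover because the preimage of a cylinder decomposition is again a union of cylinders, so the direction is completely periodic on $M_2$ as well. Combining the two cases yields optimal dynamics.

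For part (2), assume in addition that $p$ is a connection point. I would show $M_2$ satisfies topological dichotomy and strict ergodicity, i.e.\ that the periodic directions are exactly the saddle connection directions. One inclusion is automatic: a completely periodic direction is in particular a saddle connection direction. For the reverse, I would apply part (d) of the proposition, whose hypothesis ``all branch points are connection points'' is satisfied since the single branch point $p$ is assumed to be a connection point; this gives that every saddle connection direction on $M_2$ is periodic. Together with part (1) and the definition of topological dichotomy and strict ergodicity from the introduction, this is exactly the claimed stronger property.

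For part (3), assume $p$ is not a periodic point. The conclusion that $M_2$ is not a lattice surface is now immediate from part (c) of the proposition: its hypothesis requires only that the set of branch points contain a non-periodic point, and here the unique branch point $p$ is itself non-periodic. Thus all three parts of the corollary reduce to direct applications of the proposition, with the only genuine argument being the passage from ``contains a saddle connection'' to ``completely periodic'' in part (1) via the lattice property of the base surface $M_1$.
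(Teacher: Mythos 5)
Parts (2) and (3) of your proposal are correct and coincide with the paper's proof: (2) is an application of Proposition \ref{prop: basics}(d) (plus part (1) and (b) for strict ergodicity), and (3) is immediate from (c). The problem is in part (1), at exactly the step you yourself flagged as the main obstacle. Your resolution rests on the claim that a direction which is non-minimal on $M_2$ projects to a non-minimal direction on $M_1$, and this is unjustified and false in general. Non-minimality does not descend under branched covers: the projection of a non-dense orbit can perfectly well be dense (the closure of the orbit upstairs is a proper invariant set, but its image downstairs can be all of $M_1$). Concretely, the saddle connection $\delta$ on $M_2$ furnished by (a) may have endpoints at ramification points lying over the branch point $p$, and $p$ is in general a \emph{regular} point of $M_1$; then $\delta$ projects to a geodesic segment from $p$ to $p$ or from $p$ to a singularity which need not be (part of) a saddle connection on $M_1$, and the direction can be minimal on $M_1$. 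The classical picture of two tori glued along a slit illustrates the phenomenon: the slit direction is minimal on the base torus but manifestly non-minimal on the cover.

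This is precisely the case the paper's proof isolates as case (iii), second subcase, and there the correct conclusion is not complete periodicity but \emph{unique ergodicity} of the direction on $M_2$: when the projected segment does not extend to a saddle connection on $M_1$, the direction is minimal on $M_1$, and one invokes the Smillie--Weiss ingredient behind Proposition \ref{prop: basics}(b) to conclude unique ergodicity upstairs. So optimal dynamics is proved with the weaker disjunction ``periodic or uniquely ergodic'' for non-minimal directions, not with ``periodic'' alone. Note also that your stronger conclusion --- every non-minimal direction on $M_2$ is completely periodic, with no hypothesis on $p$ --- should have raised a red flag: whether saddle-connection directions are all periodic is exactly what the connection-point hypothesis in part (2) is for, and the paper's Remark \ref{remark: connection points} records that this is open for its main example. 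To repair your argument, replace the descent claim by the paper's trichotomy according to what $\delta$ projects to: a saddle connection on $M_1$, a segment from $p$ to itself, or a segment from $p$ to a singularity, handling the last case by the dichotomy ``extends to a saddle connection downstairs (hence periodic) or else minimal downstairs (hence uniquely ergodic upstairs).''
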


\begin{proof}
To prove (1), by (b), the minimal directions are uniquely
ergodic, and we need to prove that the remaining directions are
either completely periodic or uniquely ergodic. 
By (a), in any non-minimal direction on $M_2$ there is a saddle
connection $\delta$, and there are 
three possibilities: 
\begin{enumerate}
\item[(i)] $\delta$ projects to a saddle connection on $M_1$.
\item[(ii)] $\delta$ projects to a geodesic segment connecting the
  branch point $p$ to itself.
\item[(iii)] $\delta$ projects to a geodesic segment connecting $p$ to a singularity.
\end{enumerate}

In case (i) and (ii) since $M_1$ is a lattice surface, the direction is
periodic on $M_1$, hence on $M_2$ as well. In case (iii), there are
two subcases: if $\delta$ projects to a part of a saddle
connection on $M_1$, then it is also a periodic direction. Otherwise, the
direction must be minimal in $M_1$, hence uniquely ergodic in
$M_2$. This proves (1). Note also that if $p$ is a connection point
then the last subcase does not arise, so all directions which are
non-minimal on $M_2$ are periodic. This proves (2). Statement (3)
follows from (c).
\end{proof}

We now describe the unfolding construction \cite{FK, ZK}, extended
to parking garages. 
Let $\mathcal{P} = (h: N \to \R^2)$. An {\em edge} of $\mathcal{P}$
is a connected subset $L$ of $\partial N$ such that $h(L)$ is a
straight segment and $L$ is maximal with these properties (with
respect to inclusion). A {\em vertex} of $\mathcal{P}$ is any point
which is an endpoint of an edge. The {\em angle} at a vertex is
the total interior angle, measured via the pullback of the Euclidean
metric, at the vertex. By convention we always choose the positive
angles. Note that for polygons, angles are less than
$2\pi$, but for parking garages there is no apriori upper bound on the
angle at a vertex. Since our parking garages are rational, all angles
are rational multiples of $\pi$, and we always write them as $p/q$,
omitting $\pi$ from the notation.

Let $G= \{g_1,
\ldots, g_r\}$ be the dihedral group generated by the linear parts of
reflections in the $h(L_j)$.
Let $S$ be the topological space obtained from $N \times \{1,
\ldots, r\}$ by identifying $(x,i)$ with $(x,j)$ whenever $g_i^{-1}
g_j$ is the linear part of the reflection in an edge containing 
$h(x)$. Topologically $S$ is a compact orientable surface, and the
immersions $g_i \circ h$ on each $N \times \{i\}$ induce an atlas of
charts to $\R^2$ which endows $S$ with a translation surface
structure. We denote this translation surface by 
$M_{\mathcal{P}}$. 

We will be interested in a `partial unfolding' which is a variant of this
construction, in which we reflect a parking garage repeatedly around
several of its edges to form a larger parking garage. Formally,
suppose $\mathcal{P} = (h: N \to \R^2)$ and  $\mathcal{Q} = (h': N'
\to \R^2)$ are parking garages. We say that $\mathcal{P}$ {\em tiles
  $\mathcal{Q}$ by reflections} if the following holds. Let $G$ be
the group generated by linear parts of reflections in the edges of
$\mathcal{P}$. Then $N'$ is tiled
by sets $N'_1, \ldots, N'_{\ell}$ homeomorphic to $N$, such that for
each $j$, $h'(N'_j)$ and $h(N)$ differ by an isometry of the plane
with linear part $g_j \in G$, and any connected component of $h'(N'_i
\cap N'_j)$ which is not a point is the image of an edge of
$\mathcal{P}$ under both $g_i$ and $g_j$, such that $g_i^{-1}\circ
g_j$ is the reflection in this edge. We call $\ell$ the {\em number of
  tiles}. 

Vorobets \cite{Vorobets} realized that a tiling of parking garages
gives rise to a branched cover. More precisely: 

\begin{prop}\name{prop: precisely}
Suppose $\mathcal{P}$ tiles $\mathcal{Q}$ by reflections with $\ell$ tiles, 
$M_{\mathcal{P}}, M_{\mathcal{Q}}$ are the corresponding translation surfaces
obtained via the unfolding construction, and $G_{\mathcal{P}},
G_{\mathcal{Q}}$ are the corresponding reflection groups. Then 
there is a translation map $M_{\mathcal{Q}} \to M_{\mathcal{P}}$, such
that the following hold:

\begin{enumerate}
	\item $G_{\mathcal{Q}} \subset G_{\mathcal{P}}$. 
\item  The branch points are contained in the $G_{\mathcal{P}}$-orbit
  of the vertices of $\mathcal{P}$.
	\item The degree of the cover is
          $\frac{\ell}{[G_{\mathcal{P}}: G_{\mathcal{Q}}]}.$


\item Let $z \in M_{\mathcal{P}}$ be a point which corresponds to a
  vertex in $\mathcal{P}$  with
  angle $\frac{m}{n}$ (where $\gcd (m,n)=1$), and let $(y_i) \subset
  M_{\mathcal{Q}}$ be 
the pre-images of $z$, with angles 
  $\frac{k_i\, m}{n}$ in $\mathcal{Q}$.  
Then $z$ is a branch point of the cover if and only
if $k_{i} \nmid n$ for some $i$. 
\end{enumerate}
\end{prop}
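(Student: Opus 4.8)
The plan is to first build the translation map $M_{\mathcal{Q}} \to M_{\mathcal{P}}$ from the tiling data, then read off (1)--(3) from the bookkeeping of copies and reflections, and finally to establish the ramification criterion (4) by a purely local computation of cone angles. Recall $M_{\mathcal{P}}$ is assembled from the copies $N \times G_{\mathcal{P}}$ and $M_{\mathcal{Q}}$ from $N' \times G_{\mathcal{Q}}$. Since each tile $N'_j \subset N'$ is identified with $N$ by an isometry with linear part $g_j \in G_{\mathcal{P}}$, a point of $N'$ determines a tile index $j$ and a point $x \in N$; I would define the candidate map by sending the class of $(x',g)$, with $x' \in N'_j$ corresponding to $x \in N$, to the class of $(x, g g_j)$ (note $g g_j \in G_{\mathcal{P}}$ once (1) is known). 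The main thing to verify is that this is well defined across tile boundaries, respects both gluing relations, and is a translation in the natural charts of the two unfolded surfaces; this is exactly where the compatibility built into ``tiles by reflections'' (that $g_i^{-1}\circ g_j$ is the reflection in the shared edge) is used.

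For (1), every edge of $\mathcal{Q}$ lies in the image of an edge of $\mathcal{P}$ under some $g_j$, so the linear part of the reflection in it is a conjugate $g_j s g_j^{-1}$ of a generating reflection $s$ of $G_{\mathcal{P}}$; hence $G_{\mathcal{Q}} \subseteq G_{\mathcal{P}}$. For (3), I would compare areas: a translation map scales area by its degree, so $d = \operatorname{area}(M_{\mathcal{Q}})/\operatorname{area}(M_{\mathcal{P}}) = \ell\,|G_{\mathcal{Q}}|\operatorname{area}(N) / (|G_{\mathcal{P}}|\operatorname{area}(N)) = \ell/[G_{\mathcal{P}}:G_{\mathcal{Q}}]$. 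For (2), away from the images of the vertices both surfaces are locally isometric to a flat plane and the map is a local isometry (local degree $1$), so it is unramified there; hence the branch locus is contained in the images in $M_{\mathcal{P}}$ of the vertices of $\mathcal{P}$, i.e. their $G_{\mathcal{P}}$-orbit.

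Statement (4) is the heart, and I would isolate the local lemma: if a rational parking garage $\mathcal{R}$ has a vertex of angle $\frac{a}{b}$ with $\gcd(a,b)=1$, then the corresponding point of $M_{\mathcal{R}}$ has cone angle $2\pi a$. Indeed, the two edges at the vertex have reflections with linear parts $s,s'$ whose product is a rotation of order $b$ (here $\gcd(a,b)=1$ is used), so the copies of the wedge of angle $\frac{a}{b}\pi$ glued around the vertex form a single orbit of the dihedral group $\langle s,s'\rangle$ of order $2b$, giving total angle $2b\cdot\frac{a}{b}\pi = 2\pi a$. Applying this to $\mathcal{P}$ yields cone angle $2\pi m$ at $z$. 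To apply it at $y_i$ I must first reduce $\frac{k_i m}{n}$ to lowest terms: since $\gcd(m,n)=1$ we have $\gcd(k_i m, n) = \gcd(k_i,n)$, so the reduced numerator is $m_i' = k_i m/\gcd(k_i,n)$ and the cone angle at $y_i$ is $2\pi m_i'$. Because the map is a translation in charts, its local degree at $y_i$ equals the ratio of cone angles, $m_i'/m = k_i/\gcd(k_i,n)$, which exceeds $1$ precisely when $k_i \nmid n$. Finally, using additivity of local degrees over the fibre, $\sum_i k_i/\gcd(k_i,n) = d$, the point $z$ is a branch point iff $\#\pi^{-1}(z) < d$ iff some local degree exceeds $1$, i.e. iff $k_i \nmid n$ for some $i$, as claimed.

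I expect the main obstacle to be the careful construction and verification of the translation map in the first paragraph: one must check that $(x',g)\mapsto(x,g g_j)$ is genuinely well defined across the interior tile boundaries of $\mathcal{Q}$ and compatible with both the $G_{\mathcal{Q}}$- and $G_{\mathcal{P}}$-gluings, so that it is an honest translation map rather than merely a piecewise or measurable map. Once that is secured, the cone-angle computation in (4) is robust; the only delicate point there is the reduction of $\frac{k_i m}{n}$ to lowest terms and the resulting identity $m_i'/m = k_i/\gcd(k_i,n)$, which is exactly what converts the divisibility condition $k_i \mid n$ into non-ramification.
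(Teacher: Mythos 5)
Your proposal is correct and follows essentially the same route as the paper: the degree in (3) comes from counting copies of $\mathcal{P}$ (your area computation is the same count, since $M_{\mathcal{P}}$ and $M_{\mathcal{Q}}$ consist of $|G_{\mathcal{P}}|$ and $\ell\,|G_{\mathcal{Q}}|$ copies respectively), and (4) is exactly the paper's comparison of total cone angles, $2m\pi$ at $z$ versus $\frac{2k_i m\pi}{\gcd(k_i,n)}$ at $y_i$, with $k_i\mid n$ as the non-branching condition. The paper declares (1)--(2) ``simple and left to the reader'' and takes the existence of the covering map from Vorobets, so your explicit formula $(x',g)\mapsto (x,g g_j)$, the conjugation argument for $G_{\mathcal{Q}}\subset G_{\mathcal{P}}$, and the local-degree bookkeeping $\sum_i k_i/\gcd(k_i,n)=d$ merely supply details the paper omits.
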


\begin{proof}
Assertions (1) and (2) are simple and left to the reader. For
assertion (3) we note that $M_{\mathcal{P}}$ (resp. $M_{\mathcal{Q}}$)
is made of $|G_{\mathcal{P}}|$ (resp. $\ell \, |G_{\mathcal{Q}}|$)
copies of $\mathcal{P}$.  
The point $z$ will be a branch point if and only if the total angle
around $z \in M_{\mathcal{P}}$ differs from
the total angle around one of the pre-images $y_i \in
M_{\mathcal{Q}}$. The total angle at a singularity corresponding to a
vertex with angle $r/s$ (where $\gcd(r,s)=1$) is $2r\pi$, thus the
total angle at $z$ is $2m\pi$ and the
total angle at  $y_i $ is 
$\frac{2k_im\pi}{\gcd(k_i,n)}$. Assertion (4) follows. 
\end{proof}

\section{Non-lattice dynamically optimal parking garages}
In this section we prove the following result, which immediately
implies Theorem \ref{thm: intro}:
\begin{thm}\name{thm: example}
Let $n \geq 9$ be an odd number divisible by 3, and let $\mathcal{P}$ be
an isosceles triangle with equal angles $1/n$. Let $\mathcal{Q}$ be
the parking garage made of four copies of $\mathcal{P}$ glued as in Figure
\ref{parking}, so that $\mathcal{Q}$ has vertices (in cyclic order) with angles $1/n, 2/n, 3/n,
(n-2)/n, 2/n, 3(n-2)/n$. Then $M_{\mathcal{P}}$ is a lattice surface and
$M_{\mathcal{Q}} \to M_{\mathcal{P}}$ is a translation map with one
aperiodic branch point. In particular $\mathcal{Q}$ is a non-lattice
parking garage with optimal dynamics. 
\end{thm}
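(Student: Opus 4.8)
The plan is to realize $M_{\mathcal{Q}}$ as a branched cover of the lattice surface $M_{\mathcal{P}}$ with a single aperiodic branch point, and then to quote Corollary \ref{cor: suitable cover}: part (1) yields optimal dynamics from uniqueness of the branch point, and part (3) yields the non-lattice conclusion from its aperiodicity. Thus the argument splits into four tasks: (i) identify $M_{\mathcal{P}}$ and verify it is a lattice surface; (ii) produce the translation map $M_{\mathcal{Q}} \to M_{\mathcal{P}}$; (iii) show that its branch locus is a single point, the center of the regular $n$-gon; and (iv) show that this center is not a periodic point.

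For (i), the triangle $\mathcal{P}$ has angles $1/n, 1/n, (n-2)/n$, so it is one of Veech's isosceles triangles; by \cite{Veech - dichotomy} its unfolding $M_{\mathcal{P}}$ is the (double) regular $n$-gon surface, a lattice surface whose Veech group is a Hecke triangle group. Since $n$ is odd we have $\gcd(n-2,n)=1$, so by the angle-to-cone-angle rule used in the proof of Proposition \ref{prop: precisely} (a vertex of angle $r/s$ in lowest terms unfolds to a cone point of total angle $2r\pi$), the apex vertex (angle $(n-2)/n$) unfolds to the unique singularity, of cone angle $2(n-2)\pi$, while each base vertex (angle $1/n$) unfolds to a regular point; these regular points are the centers of the $n$-gon(s). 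For (ii), the four triangles glued as in Figure \ref{parking} exhibit $\mathcal{P}$ tiling $\mathcal{Q}$ by reflections with $\ell=4$ tiles, so Proposition \ref{prop: precisely} supplies the translation map $M_{\mathcal{Q}} \to M_{\mathcal{P}}$ and tells us the branch points lie among the images of the vertices of $\mathcal{P}$.

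For (iii) I apply Proposition \ref{prop: precisely}(4). Reading the cyclic list of vertex angles of $\mathcal{Q}$, each vertex is a preimage of a vertex of $\mathcal{P}$ with a multiplier $k$: the angles $1/n, 2/n, 3/n, 2/n$ are base-vertex preimages (source angle $m/n = 1/n$) with $k = 1,2,3,2$, while $(n-2)/n$ and $3(n-2)/n$ are apex preimages (source angle $m/n = (n-2)/n$) with $k = 1,3$; note $\gcd(m,n)=1$ in both cases, so part (4) applies. An image point is a branch point precisely when one of its multipliers fails to divide $n$. Since $n$ is odd, $2 \nmid n$, so the two vertices with $k=2$ force the corresponding center to be a branch point; since $3 \mid n$, the multipliers $k=1,3$ never obstruct, so the singularity (apex image) is not a branch point, and neither is a center carrying only $k=1,3$ preimages. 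It remains to confirm that both $k=2$ vertices lie over the \emph{same} center; this is a finite check read directly from the gluing in Figure \ref{parking}, and it yields a unique branch point, the center $c$. (It is exactly here that $3\mid n$ is needed: were $3 \nmid n$, the vertex of angle $3(n-2)/n$ would make the singularity a second branch point.)

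Task (iv) is the crux. I claim $c$ is aperiodic. Suppose not; then the $\Aff$-orbit of $c$ is finite, so its stabilizer has finite index in the Veech group, which is a (non-uniform) lattice; hence the stabilizer contains a nontrivial power $P^{m}$ of a parabolic $P$ fixing some completely periodic direction. In the cylinder decomposition of $M_{\mathcal{P}}$ in that direction $P$ acts as a global shear, which on the cylinder containing $c$ rotates the horizontal circle through $c$ by a fixed rotation number determined by the transverse coordinate of $c$ and the cylinder data; a power of $P$ fixes the interior point $c$ only if this rotation number is rational. Choosing a convenient periodic direction and locating $c$ in its cylinder, one computes this rotation number explicitly in terms of $\cos(\pi/n)$ and $\sin(\pi/n)$ and checks that it is irrational for $n$ an odd multiple of $3$ with $n \ge 9$, contradicting periodicity; the hypothesis $n \ge 9$ also rules out the degenerate small cases (for $n=3$ one gets a torus, on which $c$ is periodic). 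Alternatively this non-periodicity may be extractable from the analysis of periodic and connection points in \cite{GHS, HS}. With $c$ established as a unique aperiodic branch point, Corollary \ref{cor: suitable cover}(1) gives that $M_{\mathcal{Q}}$, hence $\mathcal{Q}$, has optimal dynamics, and Corollary \ref{cor: suitable cover}(3) gives that $M_{\mathcal{Q}}$ is not a lattice surface. The main obstacle is the irrationality computation in (iv); the bookkeeping in (iii) isolating a single center is the other place where the arithmetic of $n$ is essential.
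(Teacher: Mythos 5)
Your proposal follows the paper's proof almost step for step in its first three tasks: $M_{\mathcal{P}}$ is Veech's double $n$-gon \cite{Veech - dichotomy}, the cover comes from Proposition \ref{prop: precisely}, and the branching analysis via the multipliers $k=1,2,3,2$ over the centers and $k=1,3$ over the singularity, with $n$ odd forcing branching at the $k=2$ vertices and $3\mid n$ preventing it everywhere else, is exactly the paper's computation. One point you defer that the paper actually proves: that the two $k=2$ vertices $z_1,z_2$ lie over the \emph{same} center is not left as ``a finite check read from the figure'' but gets a one-line argument --- both are opposite the vertex $z_3$ of angle $3/n$, and within a single tile the two base vertices project to the two \emph{distinct} centers of the double $n$-gon, so $z_1,z_2$ both project to the center other than the one under $z_3$. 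Since uniqueness of the branch point is precisely the hypothesis of Corollary \ref{cor: suitable cover}, you should supply some such argument rather than gesture at the figure.

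The genuine divergence is your task (iv). The paper does not prove aperiodicity of the center at all: it cites \cite{HS}, where this is known. Your sketch (finite orbit $\Rightarrow$ the stabilizer of $c$ in $\Aff(M_{\mathcal{P}})$ has finite index $\Rightarrow$ it contains an affine map with parabolic derivative fixing $c$ $\Rightarrow$ the twist on the cylinder containing $c$ forces a rational position ratio, contradicted by irrationality) is indeed the standard mechanism behind that citation --- a point that splits the height of a cylinder irrationally is aperiodic --- and the logic of the reduction is sound, including the observation that one convenient parabolic direction suffices. But as written it is not a proof: the decisive irrationality computation is asserted (``one computes \ldots{} and checks that it is irrational'') rather than performed, and you flag it yourself as the main obstacle. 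So taken as a self-contained argument your step (iv) has a gap; taken with your fallback sentence citing \cite{GHS, HS}, it coincides with what the paper actually does. Everything else, including the final appeal to Corollary \ref{cor: suitable cover}(1) and (3), matches the paper.
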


\begin{proof}
The translation surface $M_{\mathcal{P}}$ is the double $n$-gon, one of
Veech's original examples of lattice surfaces \cite{Veech - dichotomy}. The groups 
$G_{\mathcal{P}}$ and $G_{\mathcal{Q}}$ are both equal to the dihedral
group $D_n$. Thus by Proposition \ref{prop: precisely},
the degree of the cover $M_{\mathcal{Q}} \to M_{\mathcal{P}}$ is
four. 
Again by Proposition
\ref{prop: precisely}, since $n$ is odd and divisible by 3, the only vertices which
correspond to branch points are the two vertices $z_1, z_2$ with angle $2/n$ (they
correspond to the case $k_i =2$ while the other vertices correspond to
$1$ or $3$). In the surface $M_{\mathcal{P}}$ there are two points
which correspond to vertices of equal angle in $\mathcal{P}$ (the
centers of the two $n$-gons), and these points are known to be
aperiodic \cite{HS}. We need to check that $z_1$ and $z_2$
both map to the same point in $M_{\mathcal{P}}$. This follows from the
fact that both are opposite the vertex $z_3$ with angle $3/n$, which also
corresponds to the center of an $n$-gon, so in $M_{\mathcal{P}}$
project to a point which is distinct from $z_3$. 
\end{proof}

\begin{remark}\name{remark: connection points}
As of this writing, it is not known whether the center of the regular
$n$-gon is a connection point on the double $n$-gon surface. If this
turns out to be the case for some $n$ which is an odd multiple of 3,
then by Corollary \ref{cor: suitable cover}(2), our 
construction satisfies strict ergodicity and topological dichotomy. See
\cite{AS} for some recent related results. 
\end{remark}

\begin{remark}\name{remark: quadratic growth}
Since our
examples are obtained by taking branched covers over lattice surfaces,
a theorem of Eskin, Marklof and Morris \cite[Thm. 8.12]{EMM} shows that our examples
also satisfy a quadratic growth estimate of the form
$N_{\mathcal{P}}(T) \sim cT^2$; moreover \S 9 of \cite{EMM} explains
how one may explicitly compute the constant $c$. 
\end{remark}

\section{Non-lattice optimal polygons are hard to find}\name{section: meital thesis} 
In this section we present results indicating that the above
considerations will not easily yield a non-lattice
polygon with optimal dynamics. 
Isolating the properties necessary for our proof of Theorem
\ref{thm: example}, we
say that polygons $\mathcal{P}, \mathcal{Q}$ are a {\em suitable pair} if the
following hold:
\begin{itemize}
\item
$\mathcal{P}$ is a lattice polygon. 
\item
$\mathcal{P}$ tiles $\mathcal{Q}$ by reflections. 
\item
The corresponding cover $M_{\mathcal{Q}} \to M_{\mathcal{P}}$ as in
Proposition \ref{prop: precisely} has a unique branch point which is aperiodic. 

\end{itemize}

In her M.Sc. thesis at Ben Gurion University, 
the first-named author conducted an extensive
 search for a suitable pair of polygons. By Corollary \ref{cor:
   suitable cover}, such a pair will have yielded a non-lattice
 polygon with optimal dynamics. The search begins with a list of
 candidates for $\mathcal{P},$ i.e. a list of currently known lattice
 polygons. At present, due to work of many authors, there is a fairly
 large list of known lattice 
 polygons but there is no classification of all lattice
 polygons. The following is the main result of \cite{Meital thesis}:

\begin{thm}[M. Cohen] \name{thm: Meital thesis}
Among all currently known lattice polygons, there is no $\mathcal{P}$
for which there is $\mathcal{Q}$ such that $\mathcal{P}, \mathcal{Q}$
is a suitable pair. 

\end{thm}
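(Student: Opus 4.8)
The plan is to reduce Theorem \ref{thm: Meital thesis} to a finite, verifiable check. Since the statement quantifies only over the \emph{currently known} lattice polygons, I would first assemble the explicit finite list of candidate base polygons $\mathcal{P}$: the triangles and quadrilaterals arising from Veech's original examples, the Ward, Kenyon--Smillie, and Bouw--M\"oller families, and the sporadic examples of Vorobets, Hooper, etc. For each such $\mathcal{P}$ one must rule out the existence of \emph{any} parking garage (indeed polygon) $\mathcal{Q}$ that $\mathcal{P}$ tiles by reflections and for which the cover $M_{\mathcal{Q}} \to M_{\mathcal{P}}$ has a \emph{unique aperiodic branch point}. The key structural reduction is provided by Proposition \ref{prop: precisely}(4): a vertex $z$ of $\mathcal{P}$ with angle $m/n$ becomes a branch point precisely when some preimage in $\mathcal{Q}$ has angle $k_i m/n$ with $k_i \nmid n$. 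Thus the branch locus is entirely determined by the local angle data of the tiling around the vertices of $\mathcal{P}$, which is a purely combinatorial constraint independent of the (unbounded) global shape of $\mathcal{Q}$.

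The central step is to translate the three conditions of a \emph{suitable pair} into conditions on vertices. First, for the cover to have any nontrivial branch point at all, $\mathcal{P}$ must possess a vertex whose angle $m/n$ admits an integer multiple $k\,m/n$ (with $k\nmid n$) that can be realized as a consistent total angle in a reflection-tiling; this forces arithmetic relations among the denominators of the vertex angles of $\mathcal{P}$. Second, \emph{uniqueness} of the branch point means that exactly one $G_{\mathcal{P}}$-orbit of vertices can be ramified, so all other vertices must unfold "evenly." Third, and most restrictively, the branch point must be \emph{aperiodic}: I would invoke the known classification of periodic points on the candidate lattice surfaces. For most of the listed examples the periodic points are completely understood — on the double $n$-gon the only non-obvious periodic points are the centers (which may or may not be connection points, but whose periodicity status is known via \cite{HS}), and for the other families the Weierstrass or fixed-point structure pins down precisely which vertex-images are periodic. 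The strategy is then a case analysis: show that every vertex of $\mathcal{P}$ whose angle data could produce a branch point in fact maps to a \emph{periodic} point of $M_{\mathcal{P}}$, so that condition (c) of the suitable-pair definition fails.

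I would organize the verification polygon-family by polygon-family. For each family one reads off the vertex angles, computes which orbits are forced to ramify by the divisibility criterion of Proposition \ref{prop: precisely}(4), and checks the periodicity of the corresponding points on $M_{\mathcal{P}}$ against the known tables of periodic points. The triangle families (right triangles, the $(n,n,n-2)$ isosceles triangles, etc.) are handled by the double-$n$-gon and regular-$n$-gon analysis; here the only candidate branch points are centers and vertices, and the vertices are always periodic while the centers, though possibly aperiodic, do \emph{not} arise as branch points for a genuine \emph{polygon} (as opposed to the parking garage of Theorem \ref{thm: example}) because the embedding constraint forbids the angle-tripling configuration that isolated $z_1,z_2$ there. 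For the quadrilateral and Bouw--M\"oller families the analysis is analogous but requires separately tabulating the affine automorphism orbits of the singularities.

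The main obstacle is twofold. The deeper difficulty is that there is \emph{no} classification of all lattice polygons, so the theorem is intrinsically limited to the known list; one cannot hope to make the argument uniform, and each new family must be inspected by hand against the literature on its periodic points. The more technical obstacle is the requirement that $\mathcal{Q}$ be a genuine \emph{polygon} (an embedding), not merely a parking garage: the embedding constraint is a global condition that is awkward to combine with the purely local branch-point criterion, and ruling out \emph{all} embedded $\mathcal{Q}$ (whose number of tiles is a priori unbounded) requires showing that whenever the local angle data would force an aperiodic branch point, the resulting reflection-tiling either fails to close up into an embedded polygon or forces additional, periodic, branch points — thereby violating uniqueness. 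Controlling this interplay between the local ramification data and the global embeddability, across every known family simultaneously, is the heart of the proof and the reason the search in \cite{Meital thesis} was extensive rather than conceptual.
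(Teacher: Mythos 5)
Your central step---``show that every vertex of $\mathcal{P}$ whose angle data could produce a branch point in fact maps to a \emph{periodic} point of $M_{\mathcal{P}}$''---is false in precisely the cases where the theorem is hard, and it is not how the paper (or the thesis) argues. For Ward's triangles with angles $\left(\frac1n,\frac{1}{2n},\frac{2n-3}{2n}\right)$, $n$ odd (and likewise for the other families unfolding to the double $n$-gon), the vertex of angle $1/n$ unfolds to the centers $c_1,c_2$ of the two $n$-gons, which are \emph{aperiodic} by \cite{HS}; indeed Theorem \ref{thm: example} of this very paper branches over exactly such a point to produce a non-lattice parking garage with optimal dynamics. So checking periodicity ``against known tables'' cannot close these cases. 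What must be shown instead is that no \emph{embedded} $\mathcal{Q}$ can branch over exactly one such aperiodic point, and your only remark on this (``the embedding constraint forbids the angle-tripling configuration'') names the conclusion without supplying any mechanism. That mechanism is the actual content of the proof.

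The paper supplies it through two lemmas absent from your proposal. First, Lemma \ref{lem: group}: since the branch point is unique, it is fixed by $G_{\mathcal{Q}}$, indeed by every $g\in G_{\mathcal{P}}$ with $gM_{\mathcal{Q}}=M_{\mathcal{Q}}$; since the reflection in the edge $e_1$ opposite the $1/n$-vertex swaps $c_1$ and $c_2$, no $G_{\mathcal{P}}$-image of $e_1$ may be an external edge of $\mathcal{Q}$ (restriction \equ{eq: restriction1}). Second, Lemma \ref{lem: fixed implies periodic} (via Lemma \ref{lem: fixed points}): on a lattice surface any fixed point of an affine automorphism with derivative $-\mathrm{Id}$ is periodic; this both disposes of the even-$n$ cases and yields Corollary \ref{cor: -id}, that no angle of $\mathcal{Q}$ has even denominator in lowest terms. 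Combined with the bound of $2\pi$ on a polygon's vertex angles---the one place embeddedness enters, as the paper flags explicitly---this forces every $x_3$-vertex of $\mathcal{Q}$ to be exactly doubled and delimited by two external edges. These constraints then drive the incremental construction of Figures \ref{ward_2_options} and \ref{ward5_stages_Q}: each attempt to complete $\mathcal{Q}$ is forced either to branch over a periodic $x_2$-point or to branch over \emph{both} aperiodic centers $c_1$ and $c_2$, violating uniqueness, and the external-edge constraint prevents any repair. Your divisibility criterion from Proposition \ref{prop: precisely}(4) is necessary but far from sufficient: it says which local angle multiples ramify, but without the two lemmas above there is no way to control the unboundedly many candidate reflection-tilings, and the case analysis you outline has no engine. (A smaller slip: one cannot ``rule out the existence of any parking garage $\mathcal{Q}$,'' as your opening suggests---Theorem \ref{thm: example} exhibits one; the theorem is a statement about polygons only.)
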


The proof of Theorem \ref{thm: Meital thesis} contains a detailed
case-by-case analysis for each of the different possible
$\mathcal{P}$. These cases involve some common arguments which we will
illustrate in this section, by proving the special case in which
$\mathcal{P}$ is any of the obtuse 
triangles investigated by Ward \cite{Ward}: 

\begin{thm} \name{thm: for paper}
Let $\mathcal{P}= \mathcal{P}_n$ be the (lattice) triangle with angles
$\displaystyle{\left(\frac1n, \frac{1}{2n}, \frac{2n-3}{2n}\right)}$. Then there
is no polygon $\mathcal{Q}$ 
for which $\mathcal{P}, \mathcal{Q}$ is a suitable pair.  
\end{thm}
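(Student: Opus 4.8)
The plan is to argue by contradiction: suppose $\mathcal{Q}$ exists so that $(\mathcal{P}_n, \mathcal{Q})$ is a suitable pair. I want to analyze what constraints the three defining properties of a suitable pair impose on the vertices of $\mathcal{Q}$. The crucial tool is Proposition \ref{prop: precisely}(4): for each vertex $z$ of $\mathcal{P}_n$, with angle $\frac{m}{n'}$ in lowest terms (here $n' = n$ or $2n$ depending on the vertex), the preimages $y_i$ in $\mathcal{Q}$ have angles $\frac{k_i m}{n'}$, and $z$ is a branch point precisely when $k_i \nmid n'$ for some $i$. Since a suitable pair requires exactly one aperiodic branch point, I must identify which vertex of $\mathcal{P}_n$ can serve as that branch point and force the branching behavior at every \emph{other} vertex to be trivial.

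First I would recall, citing Ward \cite{Ward} and the surveys, the description of $M_{\mathcal{P}_n}$ as a lattice surface and identify which of its singularities are periodic and which (if any) are aperiodic; the aperiodic branch point of the cover must project to an aperiodic point of $M_{\mathcal{P}_n}$, by Proposition \ref{prop: basics}(c) and the definition. This typically pins the candidate branch vertex down to the vertex whose unfolding gives the center-type singularity, i.e.\ the angle-$\frac1n$ or angle-$\frac{1}{2n}$ vertex, since the third angle $\frac{2n-3}{2n}$ unfolds to a point that is either regular or a periodic singularity. Next, at every non-branch vertex I would use Proposition \ref{prop: precisely}(4) in the contrapositive: the requirement $k_i \mid n'$ for all $i$ is a divisibility condition that severely constrains how $\mathcal{Q}$ may be assembled around that vertex. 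In particular, since $\mathcal{Q}$ is a genuine \emph{polygon}, every vertex angle of $\mathcal{Q}$ must be strictly less than $2\pi$, i.e.\ $\frac{k_i m}{n'} < 2$; combined with the divisibility constraints this leaves only a very short list of admissible local configurations $k_i$ at each vertex.

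The heart of the argument, and the step I expect to be the main obstacle, is the combinatorial bookkeeping that shows these per-vertex constraints are globally inconsistent. Concretely, the tiling datum assigns to each vertex of $\mathcal{Q}$ a multiset of local angles, and summing the branching contributions over all preimages of a given vertex $z$ is constrained by the degree of the cover via Proposition \ref{prop: precisely}(3): the $k_i$ associated to a single $z$ must sum to the degree $\ell/[G_{\mathcal{P}}:G_{\mathcal{Q}}]$ (as the tiles fitting around $z$ partition the total angle). I would set up this counting identity, impose simultaneously (i) the polygon angle bound $< 2\pi$ at each vertex of $\mathcal{Q}$, (ii) the no-branching divisibility $k_i \mid n'$ at the two non-branch vertices, and (iii) the requirement that the branch vertex contribute exactly one aperiodic point and no other branching. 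The payoff is that these conditions cannot be met simultaneously: either the only way to satisfy the divisibility at the non-branch vertices forces $\mathcal{Q}$ to reintroduce a vertex of angle $\geq 2\pi$ (so $\mathcal{Q}$ is a parking garage, not a polygon), or it forces a second branch point or a periodic branch point, contradicting suitability.

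I would organize the contradiction as a finite case analysis on the value $k_i$ at the candidate branch vertex, tracking in each case the induced angles at the remaining vertices and checking the angle bound. The delicate point is handling the vertex with the large angle $\frac{2n-3}{2n}$: because this angle is already close to $\pi$, even the smallest nontrivial multiple $k_i = 2$ produces an angle exceeding $2\pi$, so $\mathcal{Q}$ cannot be reflected nontrivially across the two long sides of $\mathcal{P}_n$ without leaving the category of polygons. This observation — that reflecting an obtuse triangle across its longest sides immediately violates the polygon constraint — is really the engine of the whole proof, and I would state it as a lemma and then verify that the remaining freedom (reflecting only across the short sides meeting at the acute vertices) cannot produce an aperiodic branch point while keeping the cover of the required degree with a single branch point.
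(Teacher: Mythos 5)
Your proposed ``engine'' fails on a direct computation: doubling the obtuse angle gives $2\cdot\frac{2n-3}{2n}\pi=\frac{2n-3}{n}\pi<2\pi$, so reflecting $\mathcal{P}_n$ across a side adjacent to the obtuse vertex is perfectly compatible with $\mathcal{Q}$ being a polygon. The actual situation is the opposite of your claimed lemma: one must show that every vertex of $\mathcal{Q}$ lying over the obtuse vertex $x_3$ is \emph{exactly} doubled, and your toolkit cannot even rule out the multiplier $k=1$ there, since $1\mid 2n$ creates no branching and violates no angle bound. Excluding $k=1$ (and all odd multipliers) requires the group-theoretic input you omit entirely: a fixed point of an affine automorphism with derivative $-\mathrm{Id}$ on a lattice surface is periodic (Lemma \ref{lem: fixed implies periodic}), whence $-\mathrm{Id}\notin D(G_{\mathcal{Q}})$ and no angle of $\mathcal{Q}$ may have even denominator (Corollary \ref{cor: -id}); since $\frac{2n-3}{2n}$ has even denominator, the multiplier at $x_3$ must be even, and then the bound $2k\cdot\frac{2n-3}{2n}\le 2$ forces $k=1$ in the even multiplier $2k$ --- this is precisely where the polygon hypothesis enters. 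Your counting identity is also misstated: what sums to the degree is the collection of local degrees $k_i/\gcd(k_i,n')$, not the $k_i$ themselves. The same circle of ideas handles points you wave at: $n$ even dies because the rotation by $\pi$ fixes all vertex points, $x_2$ is periodic by the same lemma, and the aperiodicity of the two points $c_1,c_2$ over the $\frac1n$-vertex is a cited fact from [HS], so the branch point is pinned to $x_1$, not left ambiguous as in your outline.

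More structurally, a purely local, per-vertex case analysis cannot close the argument, because candidate polygons $\mathcal{Q}$ may have arbitrarily many tiles: nothing in conditions (i)--(iii) of your scheme terminates the search. The paper's contradiction is generated by a global constraint you are missing, Lemma \ref{lem: group}: the unique branch point is fixed by every $g\in G_{\mathcal{P}}$ with $gM_{\mathcal{Q}}=M_{\mathcal{Q}}$, and since reflection in the edge $e_1$ opposite $x_1$ swaps $c_1$ and $c_2$, no $G_{\mathcal{P}}$-image of $e_1$ may be an external edge of $\mathcal{Q}$ \equ{eq: restriction1}. This internality constraint, combined with the forced doubling at $x_3$, propagates: starting from the unique admissible doubled-$x_3$ seed, each stage either is a lattice polygon (hence not $\mathcal{Q}$) or exhibits forbidden branching, and after finitely many forced additions one reaches a configuration with branching over two vertices of type $x_1$ lying over the \emph{distinct} points $c_1\ne c_2$, each delimited by two external sides so the branching cannot be repaired by further growth. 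Without \equ{eq: restriction1} and the $-\mathrm{Id}$ obstruction, your divisibility-plus-angle-bound bookkeeping has no mechanism to force this terminating cascade, so the proposal as written does not yield the theorem.
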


Our proof relies on some auxilliary statements which are of independent
interest. In all of them, $M_{\mathcal{Q}} \to M_{\mathcal{P}}$ is the
branched cover with unique branch point corresponding to a suitable
pair $\mathcal{P}, \mathcal{Q}$. These statements are also valid in
the more general case in which $\mathcal{P, Q}$ are parking
garages. 

Recall that an affine automorphism of a translation surface is a
homeomorphism which is linear in
charts. We denote by $\Aff(M)$ the group of affine automorphisms of
$M$ and by $D: \Aff(M) \to \GL_2(\R)$ the homomorphism mapping an
affine automorphism to its linear part. Note that we allow
orientation-reversing affine automorphisms, i.e. $\det \varphi$ may be
1 or -1. 

We denote by 
$G_{\mathcal{Q}} \subset G_{\mathcal{P}}$ the corresponding reflection
groups. From the description of the unfolding
construction, it is clear that $G_{\mathcal{P}}$ acts on
$M_{\mathcal{P}}$ by permuting copies of $\mathcal{P}$, and this
yields an action by affine
automorphisms. Moreover, since $M_{\mathcal{Q}}$ is also made of
copies of $\mathcal{P}$, any $g \in G_{\mathcal{P}}$ maps
$M_{\mathcal{Q}}$ to a surface $gM_{\mathcal{Q}}$ with a covering map
$gM_{\mathcal{Q}} \to M_{\mathcal{P}}$. 

\begin{lem}\name{lem: group}
The branch point of the cover $M_{\mathcal{Q}} \to
M_{\mathcal{P}}$ is fixed by the subgroup 
$$\{g \in G_{\mathcal{P}} : gM_{\mathcal{Q}} = M_{\mathcal{Q}} \};$$
in particular it is fixed by $G_{\mathcal{Q}}$. 

\end{lem}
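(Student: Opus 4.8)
The plan is to show that the subgroup $H \df \{g \in G_{\mathcal{P}} : gM_{\mathcal{Q}} = M_{\mathcal{Q}}\}$ preserves the branch point because it acts on $M_{\mathcal{Q}}$ by affine automorphisms, and the branch point is canonically determined by the cover. First I would observe that each $g \in G_{\mathcal{P}}$ acts on $M_{\mathcal{P}}$ as an affine automorphism (as noted in the paragraph preceding the lemma) and that $g$ carries $M_{\mathcal{Q}}$ to $gM_{\mathcal{Q}}$ compatibly with the projections to $M_{\mathcal{P}}$; that is, if $\pi: M_{\mathcal{Q}} \to M_{\mathcal{P}}$ denotes the covering map, then the map $g M_{\mathcal{Q}} \to M_{\mathcal{P}}$ is $g \circ \pi \circ g^{-1}$. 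For $g \in H$ this means $g$ restricts to an affine automorphism of $M_{\mathcal{Q}}$ covering the action of $g$ on $M_{\mathcal{P}}$, i.e. $\pi \circ g = g \circ \pi$ on $M_{\mathcal{Q}}$.

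Next I would use the characterization of the branch point in terms of the cover. Since $\mathcal{P}, \mathcal{Q}$ form a suitable pair, the cover has a unique branch point $z \in M_{\mathcal{P}}$, and $z$ is distinguished among all points of $M_{\mathcal{P}}$ as the unique point over which the local covering structure fails to be unramified in the appropriate sense — concretely, $z$ is characterized by the property that the total cone angle at some preimage $y \in \pi^{-1}(z)$ differs from $2\pi$ times the expected multiple, as computed in Proposition \ref{prop: precisely}(4). The essential point is that $z$ is an \emph{intrinsic} invariant of the branched cover $\pi$: any automorphism of the covering datum must permute the set of branch points, and since there is only one, it must fix $z$.

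To make this precise I would argue as follows. Fix $g \in H$. Because $g$ is an affine automorphism of $M_{\mathcal{P}}$ satisfying $\pi \circ g = g \circ \pi$, conjugation by $g$ sends the covering map $\pi$ to an isomorphic covering map whose branch locus is $g(\{z\}) = \{g(z)\}$. But the branch locus of $\pi$ itself is exactly $\{z\}$, and conjugating a branched cover by an automorphism of the base that lifts to an automorphism of the total space does not change which points of the base are branch points — the ramification behaviour over $g(z)$ for the conjugated cover equals the ramification behaviour over $z$ for the original cover. Hence $g(z)$ is a branch point of $\pi$, and by uniqueness $g(z) = z$. This shows every $g \in H$ fixes $z$. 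Since $G_{\mathcal{Q}} \subset H$ (as $G_{\mathcal{Q}}$ preserves $M_{\mathcal{Q}}$ by construction, being the group permuting the copies of $\mathcal{P}$ that make up $M_{\mathcal{Q}}$), the final assertion follows immediately.

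The main obstacle I anticipate is making the compatibility $\pi \circ g = g \circ \pi$ genuinely rigorous at the level of the explicit unfolding construction, rather than merely asserting it. One must verify that the action of $g \in G_{\mathcal{P}}$ on $M_{\mathcal{P}}$ by permuting copies of $\mathcal{P}$ lifts, when $g M_{\mathcal{Q}} = M_{\mathcal{Q}}$, to the corresponding permutation of the copies of $\mathcal{P}$ constituting $M_{\mathcal{Q}}$, and that this lift is compatible with the edge-identifications defining both surfaces. Once this bookkeeping is in place, the uniqueness of the branch point does the rest; the only subtlety is to confirm that $z$ is characterized purely in terms of the cover (via the cone-angle discrepancy of Proposition \ref{prop: precisely}(4)) and not via any choice that $g$ might fail to respect.
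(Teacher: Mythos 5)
Your proposal is correct and follows essentially the same route as the paper: the paper's proof simply observes that any $g \in G_{\mathcal{P}}$ carries branch points of $M_{\mathcal{Q}} \to M_{\mathcal{P}}$ to branch points of $gM_{\mathcal{Q}} \to M_{\mathcal{P}}$, so that uniqueness of the branch point forces it to be fixed whenever $gM_{\mathcal{Q}} = M_{\mathcal{Q}}$. Your additional care about the compatibility $\pi \circ g = g \circ \pi$ and the intrinsic (cone-angle) characterization of the branch locus just makes explicit what the paper leaves implicit.
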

\begin{proof}
Any $g \in G_{\mathcal{P}}$ maps branch points of the cover $M_{\mathcal{Q}}
\to M_{\mathcal{P}}$ to branch points of $gM_{\mathcal{Q}} \to M_{\mathcal{P}}$; the
lemma follows from our assumption that there is a unique branch
point. 
\end{proof}

\begin{lem}\name{lem: fixed points}
If an affine automorphism $\varphi$ of a translation surface has infinitely many
fixed points then $D\varphi$ fixes a nonzero vector, in its linear
action on $\R^2$. 
\end{lem}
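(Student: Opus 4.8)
The plan is to argue directly: assuming $\varphi$ has infinitely many fixed points, I will locate two distinct fixed points joined by a short geodesic and read off a fixed vector for $D\varphi$ straight from the developing map. Write $A = D\varphi$; in any translation chart $\varphi$ has the form $w \mapsto Aw + c$, with $A$ the global constant linear part and $c$ a locally constant translation vector. The elementary observation driving the proof is that if two \emph{distinct} fixed points $p,q$ are joined by a geodesic segment $\gamma$ that develops injectively into $\R^2$ — say $p \mapsto w_0$ and $q \mapsto w_1$ with $w_0 \neq w_1$ — then, since the developing map intertwines $\varphi$ with $w \mapsto Aw + c$, we get $Aw_0 + c = w_0$ and $Aw_1 + c = w_1$; subtracting yields $A(w_1 - w_0) = w_1 - w_0$, so $A$ fixes the nonzero vector $w_1 - w_0$. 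Thus everything reduces to producing such a pair $p,q$.

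First I would use compactness: the fixed-point set $\mathrm{Fix}(\varphi)$ is closed, hence compact, and by hypothesis infinite, so it has an accumulation point $p_0 \in \mathrm{Fix}(\varphi)$. Choose a metric ball $B$ about $p_0$ small enough that $B$ contains no singularity other than possibly $p_0$ itself, and that every point of $B$ is joined to $p_0$ by a unique geodesic lying in $B$, which is moreover embedded; this is possible for $B$ of radius below the appropriate injectivity threshold at $p_0$. Since $p_0$ is an accumulation point, there is a fixed point $q \in B \setminus \{p_0\}$. Because $\varphi$ is affine it sends geodesics to geodesics, and it fixes both $p_0$ and $q$; hence it maps the geodesic $\gamma$ from $p_0$ to $q$ to a geodesic from $p_0$ to $q$, which by uniqueness is $\gamma$ again. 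Developing $\gamma$ into the plane then supplies the pair $w_0 \neq w_1$ needed for the computation above.

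The step that requires care — and which I expect to be the main obstacle — is the case in which $p_0$ is a cone point (singularity), since there the translation chart is branched and the formula $\varphi(w) = Aw + c$ is not literally available at $p_0$. I would circumvent this by developing only the interior $\gamma \setminus \{p_0\}$, which lies in the regular locus and is simply connected, so it develops isometrically to a straight segment in $\R^2$; as one approaches $p_0$ along $\gamma$ the developed points converge to a well-defined endpoint $w_0$, and the intertwining relation extends to $w_0$ by continuity. The only remaining subtlety is the uniqueness of the short geodesic from $p_0$ to $q$, which holds because geodesics emanating from a cone point are radial and a point near the cone point lies on a single such ray; shrinking $B$ guarantees this. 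With these points settled, the subtraction argument gives $A(w_1 - w_0) = w_1 - w_0$ with $w_1 - w_0 \neq 0$, completing the proof.
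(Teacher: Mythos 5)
Your proposal is correct in outline but takes a genuinely different route from the paper's printed proof. The paper argues in the contrapositive: assuming $D\varphi$ fixes no nonzero vector, so that $D\varphi-\mathrm{Id}$ is invertible, it shows every fixed point is \emph{isolated} --- near a regular fixed point $\varphi$ is expressed in a chart as the linear action of $D\varphi$, whose only fixed point is $0$, and near a singular fixed point it is covered by that linear action via a branched chart --- whence compactness makes the fixed set finite, contradicting the hypothesis. You instead argue directly: an accumulation point $p_0$ of the (closed, infinite) fixed set, a nearby fixed point $q$, the short geodesic $\gamma$ between them, the invariance $\varphi(\gamma)=\gamma$, and the developing map hand you the explicit fixed vector $w_1-w_0$, namely the direction from $p_0$ to $q$. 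Your version yields slightly more (an invariant direction through any accumulation point of fixed points) at the cost of geodesic-uniqueness bookkeeping; the paper's version is shorter and sidesteps geodesics entirely, but must invoke the local normal form at a singular fixed point, which you avoid by your limiting argument along $\gamma\setminus\{p_0\}$ --- that part of your write-up is fine.

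One step needs repair. You conclude $\varphi(\gamma)=\gamma$ ``by uniqueness,'' but the uniqueness you established is only for geodesics from $p_0$ \emph{lying in $B$}, and nothing you have said forces the image geodesic $\varphi(\gamma)$ to stay in $B$: $\varphi$ need not preserve lengths, so $\varphi(\gamma)$ is only bounded in length by $\|D\varphi\|\cdot\mathrm{length}(\gamma)$, and on a translation surface there may be many geodesics joining $p_0$ to $q$ once they are allowed to leave a small ball (for instance, segments winding past cone points). The fix is immediate precisely because $p_0$ is an accumulation point: $\varphi$ is Lipschitz with constant $K=\|D\varphi\|_{\mathrm{op}}$ for the flat metric, and it fixes $p_0$, so if you choose $q$ with $d(p_0,q)<r/K$, where $r$ is the radius of $B$, then every $x\in\gamma$ satisfies $d(x,p_0)\le d(p_0,q)$ (the geodesic is radial from $p_0$), hence $d(\varphi(x),p_0)=d(\varphi(x),\varphi(p_0))\le K\,d(x,p_0)<r$, so $\varphi(\gamma)\subset B$ and your radial-uniqueness argument applies. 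With that one quantitative choice inserted, the subtraction $A(w_1-w_0)=w_1-w_0$ goes through and the proof is complete.
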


\begin{proof}
Suppose by contradiction that the linear action of $D\varphi$ on the
plane has zero
as a unique fixed point, and let $M_{\varphi}$ be the set of fixed points for
$\varphi$. For any $x \in M_{\varphi}$ which is not a singularity,
there is a chart from a neighborhood $U_x$ of $x$ to $\R^2$ with
$x \mapsto 0$, and a
smaller neighborhood $V_x \subset U_x$, such that $\varphi(V_x)\subset U_x$ and when
expressed in this chart, $\varphi|_{V_x}$ is given by the linear
action of $D\varphi$ on the plane. In particular $x$ is the only 
fixed point in $V_x$. Similarly, if $x \in M_{\varphi}$ is a
singularity, then there is a neighborhood $U_x$ of $x$ which maps to
$\R^2$ via a finite branched cover ramified at $x \mapsto 0$, such
that the action of $\varphi$ in $V_x 
\subset U_x$ covers the linear
action of $D\varphi$. Again we see that $x$ is the only fixed point in
$V_x$. By compactness we find that
$M_{\varphi}$ is finite, contrary to hypothesis. 
\end{proof}

\begin{lem}\name{lem: fixed implies periodic}
Suppose $M$ is a lattice surface and $\varphi \in \Aff(M)$ has $D
\varphi = -\mathrm{Id}$. Then a fixed point for  
$\varphi$ is periodic. 

\end{lem}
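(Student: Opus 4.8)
The plan is to deduce that the orbit $\Aff(M)\cdot p$ is finite by combining two facts: that the fixed-point set of $\varphi$ is finite, and that the centralizer of $\varphi$ in $\Aff(M)$ has finite index. The bridge between the two is the elementary observation that any $\psi$ commuting with $\varphi$ permutes the fixed-point set $F \df \mathrm{Fix}(\varphi)$, since $\varphi(\psi p)=\psi(\varphi p)=\psi p$ whenever $\varphi p = p$; thus $\psi\in C$ implies $\psi p\in F$.

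First I would check that $F$ is finite. Since $D\varphi = -\mathrm{Id}$ has no nonzero fixed vector in $\R^2$, the contrapositive of Lemma~\ref{lem: fixed points} gives that $\varphi$ has only finitely many fixed points, so $F$ is finite (and contains $p$).

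Next comes the heart of the argument: showing that the centralizer $C \df \{\psi \in \Aff(M): \psi\varphi = \varphi\psi\}$ has finite index. I would prove that the whole conjugacy class of $\varphi$ is finite. Every conjugate $\psi\varphi\psi^{-1}$ has linear part $(D\psi)(-\mathrm{Id})(D\psi)^{-1} = -\mathrm{Id}$, so all conjugates lie in the set $A \df \{\eta \in \Aff(M) : D\eta = -\mathrm{Id}\}$. But $A$ is a single coset of $\ker D$, the group of translation automorphisms of $M$: indeed $A = \varphi\cdot \ker D$, since $\eta\in A$ forces $D(\varphi^{-1}\eta)=\mathrm{Id}$. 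On a compact translation surface of genus at least two the group $\ker D$ is finite — it is a subgroup of the automorphism group of the underlying Riemann surface fixing the defining holomorphic one-form — so $A$ is finite, and the conjugacy class of $\varphi$, being contained in $A$, is finite. As this class is in bijection with $\Aff(M)/C$, we conclude $[\Aff(M):C] < \infty$.

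Finally I would assemble the pieces. Writing $\Aff(M) = \bigsqcup_{i=1}^{k}\psi_i C$ with $k = [\Aff(M):C] < \infty$, and using $C\cdot p \subseteq F$, we get $\Aff(M)\cdot p = \bigcup_{i} \psi_i(C\cdot p) \subseteq \bigcup_{i} \psi_i(F)$, a union of $k$ finite sets; hence the orbit of $p$ is finite and $p$ is periodic. The step I expect to be the main obstacle is the finiteness of $A$, which rests on the finiteness of $\ker D$. This is precisely where the hypothesis that $M$ is a genuine lattice surface of higher genus (and not a torus, where $\ker D$ is infinite and the statement in fact fails) must be used; for the surfaces appearing in this paper the finiteness of $\ker D$ is standard, but it should be invoked explicitly.
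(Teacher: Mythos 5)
Your proof is correct, and it rests on exactly the three pillars of the paper's own argument: the set $F_1=\{\sigma\in\Aff(M):D\sigma=-\mathrm{Id}\}$ is a coset of the finite group $\ker D$, Lemma \ref{lem: fixed points} makes the fixed-point sets of its elements finite, and centrality of $-\mathrm{Id}$ keeps conjugates of $\varphi$ inside $F_1$. Only the assembly differs. The paper shows directly that the single finite set $\mathcal{A}=\bigcup_{\sigma\in F_1}\mathrm{Fix}(\sigma)$ is $\Aff(M)$-invariant; its key computation $\psi\sigma=f\sigma\psi$ with $f\in\ker D$ is precisely your statement that the conjugacy class of $\varphi$ lies in the coset. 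You instead run orbit--stabilizer on that conjugacy class to get a finite-index centralizer $C$, and cover the orbit of $p$ by the translates $\psi_i(\mathrm{Fix}(\varphi))$. Since $\psi_i(\mathrm{Fix}(\varphi))=\mathrm{Fix}(\psi_i\varphi\psi_i^{-1})\subseteq\mathcal{A}$, your covering set sits inside the paper's invariant set: the two proofs are the same mechanism in different bookkeeping, with yours making the group theory explicit at the cost of a coset decomposition the paper avoids, and the paper's having the small advantage of producing one canonical $\Aff(M)$-invariant set rather than a covering depending on coset representatives. Your closing caveat is also a genuine point that the paper passes over with ``known to be finite'': finiteness of $\ker D$ fails for the flat torus, where translations act transitively so no point is periodic under $\Aff(M)$ even though an automorphism with derivative $-\mathrm{Id}$ has fixed points; the lemma thus implicitly assumes the surface is not a torus (automatic for the unfolded billiard surfaces with singularities considered in this paper, where $\ker D$ is finite by Veech's theorem).
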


\begin{proof}
Let 
$$F_1 = \{\sigma \in \Aff(M) : D\sigma = - \mathrm{Id} \}
.$$ 
Then $\varphi \in F_1 $ and $F_1$ is finite, since it is a coset for
the group $\ker D$ which is known to be
finite. 
Let $\mathcal{A}\subset M$ be the set of points which are fixed by
some $\sigma \in F_1$. By Lemma \ref{lem: fixed points} this is a
finite set, which contains the fixed points for $\varphi$. 
Thus in order to prove the Lemma, it suffices to show that $
\mathcal{A}$ is  
$\Aff(M)$-invariant.

Let $\psi \in \Aff(M)$, and let $x \in \mathcal{A}$, so that $x =
\sigma(x)$ with $D \sigma = -\mathrm{Id}.$ Since -Id is central in
$\GL_2(\R)$, 
$D(\sigma \, \psi) = D (\psi \, \sigma)$, so there is $f \in \ker D$ such
that $\psi \, \sigma = f \, \sigma \, \psi$. Therefore
$$
\psi(x) = \psi \, \sigma (x)  = f \sigma \, \psi(x), \ \ \mathrm{and}
\ f\sigma \in F_1.
$$  
This proves that $\psi (x) \in \mathcal{A}$. 
\end{proof}

\begin{remark}
This improves Theorem 10 of \cite{GHS}, where a similar conclusion is
obtained under the additional assumptions that $M$ is hyperelliptic
and $\Aff(M)$ is generated
by elliptic elements. 

\end{remark}

The following are immediate consequences: 
\begin{cor}\name{cor: -id}
Suppose $\mathcal{P}, \mathcal{Q}$ is a suitable pair. Then 
\begin{itemize} 
\item
$-\mathrm{Id} \notin D(G_{\mathcal{Q}}).$
\item
None of the
angles between two edges of $\mathcal{Q}$ are of the form $p/q$ with
$\gcd(p,q)=1$ and $q$ even. 
\end{itemize}

\end{cor}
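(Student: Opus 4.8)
The plan is to derive both bullet points of Corollary~\ref{cor: -id} as direct consequences of Lemma~\ref{lem: group} and Lemma~\ref{lem: fixed implies periodic}, together with the defining property of a suitable pair, namely that the unique branch point is aperiodic. Throughout write $b \in M_{\mathcal{Q}}$ for this branch point and recall that $M_{\mathcal{P}}$ is a lattice surface, so Lemma~\ref{lem: fixed implies periodic} applies to affine automorphisms of $M_{\mathcal{P}}$ with derivative $-\mathrm{Id}$.

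For the first bullet, I would argue by contradiction: suppose $-\mathrm{Id} \in D(G_{\mathcal{Q}})$, so there is some $g \in G_{\mathcal{Q}}$ with $Dg = -\mathrm{Id}$. By Lemma~\ref{lem: group}, since $g \in G_{\mathcal{Q}}$, the element $g$ fixes the branch point $b$; note that the action of $G_{\mathcal{P}}$ on $M_{\mathcal{P}}$ described before Lemma~\ref{lem: group} is by affine automorphisms, so $g$ descends to (or is realized as) an affine automorphism of the lattice surface $M_{\mathcal{P}}$ with linear part $-\mathrm{Id}$, fixing the image of $b$. The key point is that $b$ being a branch point of the cover means its image in $M_{\mathcal{P}}$ is the branch point there, which is aperiodic by the suitable-pair hypothesis. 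But Lemma~\ref{lem: fixed implies periodic} says a fixed point of an affine automorphism of a lattice surface with derivative $-\mathrm{Id}$ must be periodic. This contradicts aperiodicity, so no such $g$ exists.

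For the second bullet, I would translate an angle condition on $\mathcal{Q}$ into the presence of $-\mathrm{Id}$ in $D(G_{\mathcal{Q}})$, reducing it to the first bullet. Recall that $G_{\mathcal{Q}}$ is the dihedral group generated by linear parts of reflections in the edges of $\mathcal{Q}$; if two edges of $\mathcal{Q}$ meet at an angle $p/q$ (times $\pi$) with $\gcd(p,q)=1$ and $q$ even, then the product of the two corresponding reflections is a rotation by $2p/q = p/(q/2)$ (times $\pi$), whose order in the rotation subgroup forces $-\mathrm{Id}$ (rotation by $\pi$) to lie in the group generated. Concretely, a dihedral group containing a rotation by an angle whose denominator is even, when reduced, contains the rotation by $\pi$, i.e.\ $-\mathrm{Id}$. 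Hence such an angle would put $-\mathrm{Id} \in D(G_{\mathcal{Q}})$, contradicting the first bullet.

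The main obstacle is the second bullet's group-theoretic bookkeeping: one must verify carefully that $q$ even (after reducing $p/q$ to lowest terms) is exactly the condition guaranteeing $-\mathrm{Id}$ appears in the dihedral group generated by the two reflections, being attentive to the factor of two introduced when passing from a reflection angle to the rotation it generates and to the normalization of angles as multiples of $\pi$. I expect the first bullet to be essentially immediate given the lemmas, so the real care goes into stating the angle-to-rotation correspondence precisely and confirming that the parity of the reduced denominator is the sharp dividing line.
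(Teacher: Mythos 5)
Your proposal is correct and takes essentially the same route as the paper, which presents the corollary as an immediate consequence of Lemma \ref{lem: group} and Lemma \ref{lem: fixed implies periodic} (first bullet) together with the dihedral-group observation that an angle $p\pi/q$ with $\gcd(p,q)=1$ and $q$ even forces $-\mathrm{Id}$ into $G_{\mathcal{Q}}$ (second bullet), exactly as you argue. The only slip is notational: in the paper's conventions the branch point lives in the base $M_{\mathcal{P}}$, not in $M_{\mathcal{Q}}$, which your argument already corrects in passing when it applies the lemmas to the image point.
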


\begin{proof}[Proof of Theorem \ref{thm: for paper}]
We will suppose that $\mathcal{Q}$ is such that $\mathcal{P, Q}$ are a
suitable pair and reach a contradiction. If $n$ is even, then
$\Aff(M_{\mathcal{P}})$ contains a rotation by 
$\pi$ which fixes the points in $M_{\mathcal{P}}$ coming from vertices
of $\mathcal{P}$. Thus by Lemma \ref{lem: fixed implies periodic} all
vertices of $\mathcal{P}$ give rise to periodic points, contradicting
Proposition \ref{prop: precisely}(2). So $n$ must be odd.

	\begin{figure}[h!]%
		\begin{center}%
	\includegraphics[scale=0.65]{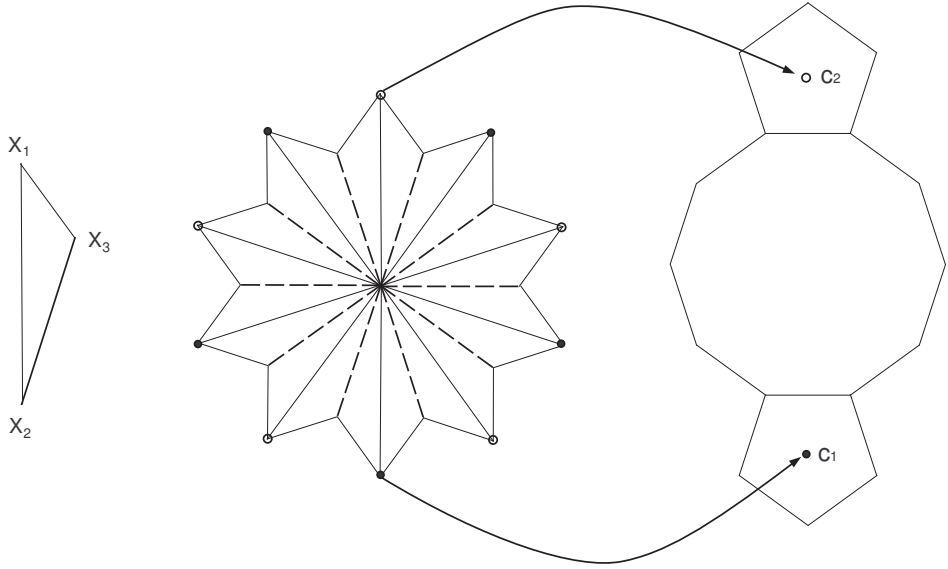}
		\caption{Ward's surface}
		\label{ward_surface}			
		\end{center}
		\end{figure}

Let $x_1, x_2, x_3$ be the vertices of $\mathcal{P}$ with
corresponding angles $1/n, 1/2n, (2n-3)/2n$. Then $x_3$ gives rise to a
singularity, hence a periodic 
point. Also using Lemma \ref{lem: fixed implies periodic} and the
rotation by $\pi$, one sees that $x_2$ also gives rise to a periodic
point. So the unique branch point must correspond to the vertex
$x_1$. The images of the vertex $x_1$ in $\mathcal{P}$ give rise to
two regular points in $M_{\mathcal{P}}$, marked $c_1, c_2$ in Figure
\ref{ward_surface}.
Any element
of $G_{\mathcal{P}}$ acts on $\{c_1, c_2\}$ by a permutation, so by Lemma 
\ref{lem: group}, $G_{\mathcal{Q}}$ must be contained in the subgroup
of index two fixing both of the $c_i$. Let $e_1$ be the edge of $\mathcal{P}$ opposite
$x_1$. Since the reflection in $e_1$, or any edge which is an image of
$e_1$ under $G_{\mathcal{P}}$, swaps the $c_i$, we have:
\eq{eq: restriction1}{e_1 \ \mathrm{\ is \ not \ an \ external \ edge
    \ of \ } \mathcal{Q}.}

We now claim that 
in $\mathcal{Q}$, any vertex which corresponds
to the vertex $x_3$ from $\mathcal{P}$ is always doubled,
i.e. consists of an angle of $(2n-3)/n$.
 Indeed, for any polygon
$\mathcal{P}_0$, the group $G_{\mathcal{P}_0}$ is the 
dihedral group $D_N$ where $N$ is the least common multiple of the
denominators of the angles at vertices of $\mathcal{P}_0$. In
particular it contains -Id when $N$ is even. 
Writing $(2n-3)/2n$ in reduced form we have an even denominator, and
since, by Corollary \ref{cor: -id}, $-\mathrm{Id} \notin
G_{\mathcal{Q}}$, in $\mathcal{Q}$ the angle at vertex $x_3$ must be
multiplied by an even integer $2k$. Since $2k (2n-3)/2n$ is bigger
than $2$ if $k>1$, and since the total angle at a vertex of a polygon
is at most $2\pi$, we must have $k=1$, i.e. any vertex in
$\mathcal{Q}$ corresponding to the vertex $x_3$ is always
doubled. This establishes the claim. {\em It is here that we have used
  the assumption that $\mathcal{Q}$ is a polygon and not a parking
  garage.} 

There are two possible configurations in which a vertex $x_3$ is
doubled, as shown in Figure \ref{ward_2_options}. The bold lines
indicate lines which are external, 
i.e. boundary edges of $\mathcal{Q}$. By \equ{eq: restriction1}, the
configuration on the right cannot occur.

	\begin{figure}[h!]%
		\begin{center}%
	\includegraphics[scale=0.55]{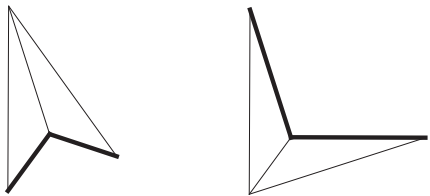}
		\caption{Two options to start the construction of $Q$}
		\label{ward_2_options}			
		\end{center}
		\end{figure}

Let us denote the polygon on the left hand side of Figure
\ref{ward_2_options} by $\mathcal{Q}_0$. It cannot be equal to
$\mathcal{Q}$, 
since it is a lattice polygon.
We now enlarge
$\mathcal{Q}_0$ by adding copies of
$\mathcal{P}$ step by step, as described in Figure
\ref{ward5_stages_Q}. Without loss of generality we first add triangle
number 1. By \equ{eq: restriction1}, the broken line indicates a side
which must be internal in $\mathcal{Q}$. Therefore, we add triangle number
2. We denote the resulting polygon by $\mathcal{Q}_1$. 
One can check by computing angles, using the fact that
$n$ is odd, and using Proposition
\ref{prop: precisely}(4) that the cover
$M_{\mathcal{Q}_1} \to M_{\mathcal{P}}$ will branch over
		the points $a$ corresponding to vertex $x_2$. Since the
                allowed branching is only over the points
                corresponding to $x_1$, we must have $\mathcal{Q}_1
                \subsetneq \mathcal{Q}$, so we continue the
                construction. Without loss of generality we add
                triangle number 3. Again, by \equ{eq: restriction1}, the
                broken line indicates a side which must be internal in
                $\mathcal{Q}$. Therefore, we add triangle number 4,
                obtaining $\mathcal{Q}_2$. 
		Now, using Proposition \ref{prop: precisely}(4) again,
                in the cover $M_{\mathcal{Q}_2} \to M_{\mathcal{P}}$ we have 
                branching over two vertices $u$ and $v$ which are both
                of type $x_1$ and correspond to distinct points $c_1$
                and $c_2$ 
                in $M_{\mathcal{P}}$. This implies $\mathcal{Q}_2
                \subsetneq \mathcal{Q}$. 
	\begin{figure}[h!]
	\begin{center}
	\includegraphics[scale=0.5]{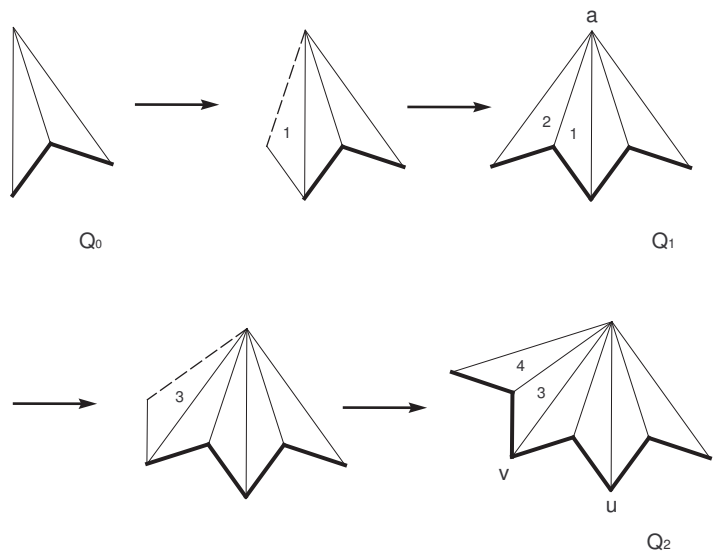}
	\caption{Steps of the construction of $Q$}
	\label{ward5_stages_Q}			
	\end{center}
	\end{figure}

		Since both vertices $u$ and $v$ are delimited by 2
                external sides, we cannot change the angle to prevent the
                branching over one of these 
		points. This means that no matter how we continue to
                construct $\mathcal{Q}$, the branching in the cover
                $M_{\mathcal{Q}} \to M_{\mathcal{P}}$ will occur over
                at least two points -- a contradiction. 
\end{proof}

\ignore{
\section{Background}
General references for translation surfaces and billiards are \cite{MaTa}, \cite{HS intro}, \cite{Vor96}.

\subsection{Translation Surfaces}
\begin{Definition}
A \textit{translation surface} or \textit{flat structure} is a finite union of Euclidean polygons $\{P_1,P_2, ..., P_n\}$ with identifications such that:
\begin{enumerate}
\item The boundary of every polygon is oriented such that the polygon lies to the left.
\item For every $1\leq j \leq n $, for every oriented side $s_j$ of 	$P_j$ there exists $1\leq k \leq n $ and an oriented side $s_k$ of $P_k$ such that $s_j$ and $s_k$ are parallel, of the same length and of opposite orientation. They are glued together in the opposite orientation by a parallel translation.
\end{enumerate}
\end{Definition}

There is a finite set of points $V$, corresponding to the vertices of the polygons. The \textit{cone angle} at a point in $V$ is the sum of the angles at the corresponding points in the polygons $P_i$. For each point in $V$ the total angle around the point is $2 \pi k$ where $k \in \N$. We say that a point is \textit{singular} of \textit{multiplicity} $k$ if $k>1$, otherwise it is called \textit{regular}. We will denote by $\Sigma=\Sigma_M$ the set of the singular points.\\

If $\backsim$ denotes the equivalence relation coming from identification of sides then we define a closed oriented surface $M=\bigcup P_j / \backsim$ with a finite set of points $\Sigma$, \text{corresponding} to the singular points of $M$.\\

There is an equivalent definition of translation surfaces in terms of special atlases called \textit{translation atlases}: A translation surface is a compact orientable surface with an atlas such that, away from finitely many points called singularities, all transition functions are translations.

\begin{prop} \cite{Vor96}
Let $M$ be a translation surface, let $k_1, k_2, \ldots k_m$ be the multiplicities of its singular points. Then $2g-2=-\chi(M)=\sum_{i=1}^{m}(k_i-1)$ where $g$ is the genus of $M$ and $\chi(M)$ is the Euler characteristic of $M$.
\end{prop}

\subsection{Relation to Billiards}
\subsubsection {The Unfolding Process for Rational Billiards:}
We will describe the unfolding process, which gives the motivation to the following definition that describes the connection of billiards to translation surfaces.\\

Let $P$ be a rational polygon (all of whose angles are rational multiples of $\pi$). Given a billiard trajectory (that avoids the vertices) beginning at a side of $P$. Given a collision with a side we reflect the polygon along the side, obtaining a mirror image of the original polygon, on which the billiard now continues in its original direction, instead of reflecting off the side. Continuing this process, we obtain a straight line, passing copies of the polygon. Since $P$ is a rational polygon, there are only finitely many possible angles of incidence of our trajectory with these copies. Thus, the billiard eventually exits a copy of the polygon in a side that is parallel with the initial side. We identify these sides by translation and continue this process, considering any unpaired side that the billiards meets as the new initial side. The result is a new polygon with various opposite sides identified (see Figure \ref{unfolding}). On this flat surface, the billiard moves along straight line segments, up to translation.\\

\begin{figure}[h!]
\begin{center}
\includegraphics[scale=0.7]{unfolding_octagon.eps}
\caption{The unfolding process of a triangle billiard table with angles $\left(\frac{\pi}{2}, \frac{\pi}{8}, \frac{3\pi}{8} \right)$
	and the flat structure obtained - the octagon with parallel sides identified.}
\label{unfolding}			
\end{center}
\end{figure}

\subsubsection {Rational billiard determines a translation surface:}
Let $A_P$ be the group of motions of the plane generated by the reflections in the sides of $P$. It follows that every copy of $P$, involved in the unfolding, is the image of $P$ under an element of the group $A_P$. The product of an even number of elements of this group preserves orientation while an odd number reverses it. To keep track of the directions of billiard trajectories in $P$ consider the group $G_P$ that consists of the linear parts of the motions from $A_P$. This subgroup of the orthogonal group is generated by reflections in the lines through the origin, which are parallel to the sides of the polygon $P$.\\

Let $P$ be a simply connected rational polygon with angles $\frac {m_i}{n_i}\pi$ where $m_i$ and $n_i$ are coprime integers, and denote $N=lcm\{n_i\}$. Then the group $G_P$ is the dihedral group $D_N$ with $2N$ elements ($N$ reflections and $N$ rotations), denote $G_P=\{g_1, g_2, \ldots g_{2N} \}$. Consider $2N$ disjoint copies of $P$ in the plane, and denote $P_k=g_k P \ ,\  k=1,2, \dots, 2N$. If $g_k$ preserves orientation, then orient $P_k$ clockwise, else orient $P_k$ counterclockwise. Now, paste their sides together pairwise: For each side $e_i^{k_1}$ of $P_{k_1}$ paste the side $e_j^{k_2}$ of $P_{k_2}$ such that $P_{k_2}$ is obtained from $P_{k_1}$ by reflection with respect to the side $e_i^{k_1}$. After these pastings are made for all the sides of all the polygons, one obtains a translation surface $M_P$. 

\begin{remark}
\label{gluing}
The point in $M$ corresponding to a vertex in $P$ is the result of gluing $2n_i$ copies of the angle $\frac{m_i}{n_i}\pi$ which sums up to an angle of $2 \pi m_i$. It follows that a vertex in $P$ defines a regular point if and only if its angle is $\frac{\pi}{n}$.
\end{remark}

\begin{prop}\label{N_even}
Let $P$ be a rational polygon. If $N$ is even then $-Id \in G_P$.
\end{prop}

\begin{proof}
$G_P=D_N$ the dihedral group with $2N$ elements. The kernel of the determinant map on $D_N$ is an index two subgroup of rotations, which is generated by the rotation with angle $\frac{2\pi}{N}$. Hence, if $N$ is even we have $-Id \in D_N$.
\end{proof}

We will call a rational angle with even denominator an \textit{even angle}.

\begin{cor}
\label{-id}
Let $P$ be a rational polygon. If one of the angles in $P$ is even or if there exist two external sides in $P$ with an even angle between them, then $-Id \in G_P$.
\end{cor}

\subsection{$SL_2^\pm(\R)$ action, Aff($M$) and Veech Groups}
Let $SL_2^\pm(\R)$ denote the group of $2 \times 2$ matrices with determinant $\pm 1$. Given any matrix $A\!\in\!SL_2^\pm(\R)$ and a translation surface $M$, we can get a new translation surface $A\!\cdot\!M$ by post composing the charts of $M$ with $A$. The transition functions of $A\!\cdot\!M$ are translations since they are the transition functions of $M$ conjugated by $A$. 

\begin{Definition}
Let $M_1$ and $M_2$ be translation surfaces. A homeomorphism $f:M_1 \to M_2$ is called an \textit{isomorphism of flat structures} if it maps the singular points of $M_1$ to the singular points of $M_2$ and is a translation in the local coordinates of $M_1$ and $M_2$.
\end{Definition}

\begin{Definition}
An \textit{affine diffeomorphism} of a translation surface $M$ is a homeomorphism $f:M\to M$ that maps singular points to singular points and is an affine map in the local coordinates of the atlas of $M$. This is equivalent to the fact that $f$ is an isomorphism of the flat structures $A\!\cdot\!M$ and $M$, where $A\!\in\!SL_2^\pm(\R)$. $A$ is called the \textit{linear part} of the automorphism $f$.
\end{Definition}

Denote by Aff($M$) the affine automorphism group of $M$. Following the construction in $\S 3.2.2$ of flat structure $M_P$ obtained from the 
billiard in polygon $P$, $G_P \subseteq \text{Aff}(M_P)$, since for each $g_i \in G_P$, $g_i M_P=M_P$. 

For example, one can easily see this action in Figure \ref{unfolding}. In this figure, we have the octagon, the surface $M_P$ obtained from the billiard in the triangle $P=\left(\frac{\pi}{2}, \frac{\pi}{8}, \frac{3\pi}{8} \right)$.$M_P$ made up of all the translations of $P$ by $g\in G_P$, i.e. $M_P=\cup_{i=0}^{15}g_iP$. Hence, $\forall g\in G_P$ we have: $gM_P=g\cup_{i=0}^{15}g_iP=\cup_{i=0}^{15}\tilde{g_i}P=M_P$.

\begin{Definition}
The \textit{Veech group} $\Gamma_M$ of a flat structure $M$ is the set of elements $A\!\in\!SL_2^\pm(\R)$ such that the flat structures $A\!\cdot\!M$ and $M$ are isomorphic. 
\end{Definition}

The following property of Veech groups is well known -

\begin{prop}
$\Gamma_M$ is a discrete and non-cocompact subgroup of $SL_2^\pm(\R)$.
\end{prop}

We call a translation surface a \textit{lattice surface} if $\Gamma_M$ is a lattice in $SL_2^\pm(\R)$. If $P$ is a polygon such that $M_P$ is a lattice surface, we say that $P$ has the lattice property.\\

In view of the above, the Veech group $\Gamma_M$ is the image of Aff($M$) under the derivative map \text{$D:\text{Aff}(M) \to \Gamma_M$}. The derivative map has a finite kernel \cite{Veech89}. For a translation surface $M$, denote ker$(D)$ by Trans$(M)$. Therefore we have an exact sequence: \[	1 \longrightarrow \text{Trans}(M) \xrightarrow{\;\;\, i \;\; } 
\text{Aff}(M) \xrightarrow{\;\: D \;\; } \Gamma_M \longrightarrow 1 \]
There are translation surfaces with \text{Trans$(M)\neq \{Id\}$}, for those surfaces we cannot identify Aff$(M)$ with $\Gamma_M$. 

\subsection{Veech's Dichotomy}
Fix a translation surface $M$. A starting point in $M$ and an angle $\theta$ determine a trajectory. A trajectory which begins and ends at a singular point is called a \textit{saddle connection}. A trajectory which does not hit singular points is called \textit{infinite}. A direction in which all infinite trajectories are dense is said to be \textit{minimal}. A direction in which all infinite trajectories are uniformly distributed is said to be \textit{uniquely ergodic}. A direction in which all orbits are periodic or saddle connections is said to be \textit{completely periodic}. Every periodic trajectory is contained in a maximal family of parallel periodic trajectories of the same period. If the surface is not a torus, then this family fills out a \textit{cylinder} bounded by saddle connections.
\begin{figure}[h!]
\begin{center}
\includegraphics[scale=0.7]{cylinder_octagon.eps}
\caption{Cylinder decomposition of the regular octagon in the horizontal direction.}
\label{cylinder}			
\end{center}
\end{figure}

\begin{thm} \cite{ZK}, \cite{BKM} 
\label{ZK}
If there are no saddle connections in direction $\theta$, then direction $\theta$ is minimal.
\end{thm}

{\bf Veech's dichotomy:} A translation surface satisfies \textit{Veech's dichotomy} if each direction is either completely periodic or uniquely ergodic.

\begin{Thm} (Veech's theorem, \cite{Veech89}) 
Suppose that $M$ is a lattice surface. Then $M$ satisfies Veech's dichotomy. 
\end{Thm}

In \cite{CHM} there are definitions for different dichotomies of translation surfaces: A translation surface is said to satisfy \textit{topological dichotomy} if for every direction - if there is a saddle connection in direction $\theta$, then there is a cylinder decomposition of the surface in that direction. A translation surface is said to satisfy \textit{strict ergodicity} if every minimal direction is uniquely ergodic. Lattice surfaces satisfy both topological dichotomy and strict ergodicity.

\subsection{Covers of Flat Structures}

\subsubsection{Riemann-Hurwitz formula}
The Riemann-Hurwitz formula describes the relation between the Euler characteristics of two surfaces when one is a ramified covering of the other.

\begin{Definition}
The map $\pi:\widetilde{M} \to M$ is said to be \textit{ramified} at a point $p \in \widetilde{M}$, if there exists a small neighborhood $U$ of $p$, such that $\pi(p)$ has exactly one pre-image in $U$, but the image of any other point in $U$ has exactly $n>1$ pre-images in $U$. The number $n$ is called the \textit{ramification index} at $p$, also denoted by $e_p$. The map $\pi$ is said to be \textit{branched} over a point $z \in M$ if $\pi$ is ramified at a point of $\pi^{-1}(z)$. 
\end{Definition}

The \textit{degree} of the map $\pi:\widetilde{M} \to M$, denoted by $d$, is the number of pre-images of non-singular points in $M$ (independent of the point). For an orientable surface $M$, the Euler characteristic $\chi(M)$ is the number $2-2g$, where $g$ is the genus of $M$. In the case of an unramified covering map of surfaces $\pi:\widetilde{M} \to M$ that is surjective and of degree $d$, we get the formula: \[\chi(\widetilde{M})=d\chi(M)\] 
The Riemann-Hurwitz formula adds a correction for ramified covers. In calculating the Euler characteristic of $\widetilde{M}$ we notice the loss of $e_p-1$ copies of $p$ above $\pi(p)$ (for more explanations see \cite{Wiki}). Therefore the corrected formula for ramified covering is: \[\chi(\widetilde{M})=d\chi(M)-\sum_{p \in \widetilde{M}}(e_p-1)\]

\subsubsection{Translation covering}
Let $M'=M\setminus \Sigma_M$. We say that a map $\pi:M \to N$ gives a \textit{translation covering} of $N$ by $M$, if the restriction $\pi:M' \to N'$ is such that $\psi \circ \pi \circ \phi^{-1}$ are translations, where $\psi$ and $\phi$ are the local coordinates of the atlases of $N$ and $M$. Note that if a translation covering is ramified, it is ramified over singularities or marked points only.\\

The following construction gives examples of covers of flat structures associated with billiards in polygons: Let $P$ and $Q$ be rational polygons such that $P$ tiles $Q$ by reflections. This means that $Q$ is partitioned into a finite number of isometric copies of $P$. Each two are either disjoint, or have a common vertex or a common side, and if two of these polygons have a side in common, then they are symmetric with respect to this side. The construction in $\S 3.2$ associates with $P$ and $Q$ translation surfaces $M_P$ and $M_Q$.

\begin{prop} \cite{Vor96} 
The flat structure $M_Q$ covers $M_P$, possibly after adding to $M_Q$ or removing from $M_P$ a certain number of removable singular points. 
\end{prop}

\begin{remark}
\label{observation}
\textbf{Some observations:}
\begin{enumerate}
	\item Let $P$ and $Q$ be polygons such that $P$ tiles $Q$ by reflections, then the branch points of the covering map 
		$\pi:M_Q \to M_P$ can arise only from the vertices in $P$.
	\item One can see that the degree of the cover is $d=\frac{n}{m}$, where $n$ is the number of copies of $P$ in $Q$ and $m$ is the 
		index of the subgroup $G_Q$ in $G_P$.
	\item $G_Q$ is a subgroup of $G_P$ and by $\S 3.2.2$ $G_Q$ is also a dihedral group. Hence $G_Q$ must be one of the dihedral groups 
		$D_{N_i}$, where $N_i$ is a divisor of $N$.
\end{enumerate}
\end{remark}

\begin{prop}
\label{branching}
Suppose $P$ and $Q$ are polygons such that $P$ tiles $Q$ by reflections. 
\begin{enumerate}
\item Let $z_0 \in M_P$ be a point which corresponds to a vertex with angle $\frac{m_0\pi}{n_0}$ in $P$ with $(m_0,n_0)=1$.
\item $y_i \in \pi^{-1}(z_0) \subset M_Q$, $i=1, \ldots l$ the pre-images of $z_0$ which corresponds to vertices with angles $\frac{k_i\cdot m_0 \pi}{n_0}$ in $Q$. 
\end{enumerate}
Then the cover $\pi: M_Q \to M_P$ will branch over $z_0$ if and only if there exists $i_0$ such that $k_{i_0} \nmid n_0$.
\end{prop}

\begin{proof}
The covering map $\pi: M_Q \to M_P$ will branch over a point $z_0 \in M_P$ if and only if the total angle around $z_0 \in M_P$ differs from the total angle around one of the pre-images $\pi^{-1}(z_0) \in M_Q$. Following Remark \ref{gluing}, the total angle of $z_0$ is $2m_0\pi$, and the total angle of $\pi^{-1}(z_0)$ is $\frac{2k_im_0}{gcd(k_i,n_0)}\pi$. Therefore, they are equal if and only if for all $i$, $gcd(k_i,n_0)=k_i \Leftrightarrow k_i \mid n_0$.
\end{proof}

\begin{Definition}
A \textit{periodic point} on a translation surface $M$ is a point which has a finite orbit under the group $\text{Aff}(M)$.
\end{Definition}

In particular, since every $\varphi \in \text{Aff}(M)$ maps singular points to singular points, and $\Sigma$ is finite, any singular point of $M$ is periodic.\\

According \cite{GJ96}, recall that two groups 
$\Gamma_1, \Gamma_2 \subset SL_2(\R)$ are \textit{commensurable}, if there exists $g \in SL_2(\R)$ such that the group 
$\Gamma_1 \cap g\Gamma_2 g^{-1}$ has finite index in both $\Gamma_1$ and $g\Gamma_2 g^{-1}$.

\begin{prop} \label{commensurable} \cite{Vor96} \cite{GJ96}
Let $M_P$ and $M_Q$ be as above. Then $\Gamma_{M_Q}$ is commensurable to the stabilizer in $\Gamma_{M_P}$ of the branch locus of $\pi$. 
Thus, if $\Gamma_{M_P}$ is a lattice in $SL_2(\R)$, $\Gamma_{M_Q}$ is also a lattice in $SL_2(\R)$ if and only if the branching is 
over periodic points.
\end{prop}

\begin{remark}
\label{non-periodic}
The last proposition along with Lemma 4 in \cite{HS covering} give us a test for non-periodicity of a point in a lattice surface: A point in a lattice surface $M$ which irrationally splits the height of a cylinder of $M$ is a non-periodic point.
\end{remark}

\subsection{The Converse to Veech's Theorem Does Not Hold}
In genus 2, McMullen showed that every surface which is not a lattice, does not satisfy Veech's dichotomy \cite{Mc decagon}. In other words, if a surface is of genus 2, then it satisfies Veech's dichotomy if and only if its Veech group is a lattice.

\begin{Thm} \cite{SW} 
There is a flat structure which satisfies Veech's dichotomy but its Veech group is not a lattice.
\end{Thm} 

This result relied on previous work of Hubert and Schmidt. In their work they defined the notion of a non-periodic connection point on a flat structure, and proved that for some surfaces there exist infinitely many non-periodic connection points.

\begin{Definition}
A nonsingular point $p$ is a \textit{connection point} of a translation surface $M$, if every geodesic emanating from a singularity passing through $p$, is a saddle connection.
\end{Definition}

\begin{Thm} \cite{HS infty} 
There are lattice surfaces of genus 2 and 3 containing infinitely many non-periodic connection points. If $M$ is such a translation surface and $\widetilde{M}$ is a translation surface obtained by forming a cover of $M$ branched only at non-periodic connection points, then $\widetilde{M}$ is not a lattice surface, yet has the property that any direction is either completely periodic or minimal.
\end{Thm}

Weiss and Smillie show that provided the branching takes place over a single point (not necessarily a connection point), the minimal directions are uniquely ergodic. Thus, these surfaces satisfy Veech's dichotomy, though their Veech groups are not lattices.\\

Following the definitions in \cite{CHM}, Smillie and Weiss show that a surface satisfying both topological dichotomy and strict ergodicity need not be a lattice surface. The construction of Smillie and Weiss proves strict ergodicity for every cover over a lattice surface branched over one point. If the branch point is not a connection point, then the cover does not satisfy the topological dichotomy. This means that there are also examples that satisfy strict ergodicity and not topological dichotomy.\\

By the Riemann-Hurwitz formula, one can show the smallest genus, for which the arguments of \cite{SW} work, is 5. If $d=2$ and there is a single branch point, then the cover is ramified at one point with $e_p=2$, and the corresponding Riemann-Hurwitz formula is: $\chi(\widetilde{M})=2\chi(M)-1$. Since $\chi(\widetilde{M})$ is even, this cannot occur. So take $d\geq 3$, and denote by $g'$ the genus of $\widetilde{M}$. We get: \[\chi(\widetilde{M})=d\chi(M)-\sum_{p \in \widetilde{M}}(e_p-1)\] Since $\sum_{p \in \widetilde{M}}(e_p-1)>0 $ we get: \[d(2-2g)-(2-2g')=\sum_{p \in \widetilde{M}}(e_p-1)> 0 \qquad \Longrightarrow \qquad g'>1+d(g-1)\] Hence, the smallest genus $g'$ which satisfies the inequality is 5, which is obtained with $g=2$ and $d=3$.

\begin{Remark}
We ignore the possibility of $g=1$, since in that case, the cover is a square-tiled surface, and hence a lattice (see \cite{GJ00}).
\end{Remark}

\newpage

\section{The Main Result}

\textbf{\large{The Question I Explored}}
\textit {\large: Is there a flat structure obtained from a billiard table that satisfies Veech's dichotomy, yet its Veech group is not a lattice?}\\

In order to find an example, we tried to follow the construction in \cite{SW}. Therefore, we will look for polygons $P$ and $Q$ such that:
\begin{enumerate}
\item $P$ has the lattice property.
\item $P$ tiles $Q$ by reflections.
\item The branched covering map $\pi :M_Q \to M_P$ is branched over a single non-periodic connection point.
\end{enumerate}

So far we can barely indicate connection points. 
However, the following proposition shows that Veech's 
dichotomy holds, even if we omit the requirement that the 
branch point will be a connection point. Following this 
result, we can expand our search to find a cover which is 
branched only over one non-periodic point.

\begin{definition} 
We say that a cover $\pi:M_Q \to M_P$ is an 
\textit{appropriate cover} if:
\begin{enumerate}
\item $P$ be a lattice polygon.
\item $P$ tiles $Q$ by reflections.
\item The branched covering map $\pi:M_Q \to M_P$ 
		is branched over a single non-periodic point.
\end{enumerate}
\end{definition} 

\begin{prop}
\label{dichotomy}
Let $P$ and $Q$ be as above such that the cover $\pi :M_Q \to M_P$ 
is branched over a single point.
If the branching is over a single point it satisfies Veech dichotomy.
Moreover, if the point is non-periodic, it is not a lattice surface. 
\end{prop}

\begin{proof}
We need to show that any direction in $M_Q$ is either completely periodic or uniquely ergodic. According to \cite{SW}, provided the branching is over a single point, the minimal directions are uniquely ergodic. Therefore, we need to prove that the remaining directions are either completely periodic or uniquely ergodic.

According to Theorem \ref{ZK}, if a direction in $M_Q$ is not minimal, then there is a saddle connection in this direction. There are three types of saddle connections on $M_Q$:
\begin{enumerate}
\item Those that project to a saddle connection on $M_P$.
\item Those that project to a geodesic segment connecting $p$ to itself.
\item Those that project to a geodesic segment connecting $p$ to a singularity.
\end{enumerate}

Notice that any completely periodic direction in $M_P$ is a completely periodic direction in $M_Q$. Since $M_P$ is a lattice surface, the first category of directions is completely periodic directions. The second category is obviously periodic directions in $M_P$. For the third category we have 2 possibilities: If it projects to a saddle connection on $M_P$, then it is also a periodic direction. Else, the direction must be minimal in $M_P$, hence uniquely ergodic in $M_Q$.

If the covering map $\pi:M_Q \to M_P$ is branched over a non-periodic point, then Proposition \ref{commensurable} 
implies that the polygon $Q$ does not have the lattice property.
\end{proof}

\begin{Remark}
In case $p$ is a connection point, the Veech dichotomy on $M_Q$ permits a strong formulation: $M_Q$ satisfies strict ergodicity and topological dichotomy. In particular, the completely periodic directions are precisely the saddle connection directions.
\end{Remark}

\begin{cor}
An appropriate cover gives rise to a billiard polygon satisfying Veech's dichotomy without the lattice property.
\end{cor}

\subsection{List of all known lattice surfaces coming from polygons}

For the moment there is no classification of lattice polygons.
The following list contains the known lattice polygons:

\begin{enumerate}
\item Regular polygons \cite{Veech89}.
\item Right triangles with angles $\left(\frac{\pi}{2}, \frac{\pi}{n}, \frac{(n-2)\pi}{2n} \right)$ for $n\geq 4$ \quad \cite{Veech89}, \cite{Vor96}, \cite{KeSm}.
\item Acute isoceles triangles with angles $\left(\frac{(n-1)\pi}{2n}, \frac{(n-1)\pi}{2n}, \frac{\pi}{n}\right)$ for $n\geq 3$ \ \cite{Veech89}, \cite{Vor96}, \cite{GJ00}, \cite{KeSm}.
\item Obtuse isosceles triangles with angles $\left(\frac{\pi}{n}, \frac{\pi}{n}, \frac{(n-2)\pi}{n}\right)$ for $n\geq 5$ \cite{Veech89}.
\item Acute scalene triangles 
		$\left(\frac{\pi}{4}, \frac{\pi}{3}, \frac{5\pi}{12}\right)$,
		$\left(\frac{\pi}{5}, \frac{\pi}{3}, \frac{7\pi}{15}\right)$ and
		$\left(\frac{2\pi}{9}, \frac{\pi}{3}, \frac{4\pi}{9}\right)$
		\quad \cite{Veech89}, \cite{Vor96}, \cite{KeSm} respectively.
\item Obtuse triangles with angles $\left(\frac{\pi}{2n}, \frac{\pi}{n}, \frac{(2n-3)\pi}{2n}\right)$ for $n\geq 4$ \cite{Vor96}, \cite{Wrd98}.
\item Obtuse triangle with angles $\left(\frac{\pi}{12}, \frac{\pi}{3}, \frac{7\pi}{12}\right)$ \cite{Ho}.
\item L-shaped polygons (See \cite{Mc spin} for a description).
\item Bouw and M\"{o}ller examples: (See \cite{BM} for a description)
		\begin{itemize}
			\item 4-gon with angles $\left(\frac{\pi}{n},\frac{\pi}{n},\frac{\pi}{2n},\frac{(4n-5)\pi}{2n}\right)$ for $n\geq 7$ and odd.
			\item 4-gon with angles $\left(\frac{\pi}{2},\frac{\pi}{n},\frac{\pi}{n},\frac{(3n-4)\pi}{2n}\right)$ for $n\geq 5$ and odd.
		\end{itemize}
\item Square-tiled polygons \cite{GJ96}.
\end{enumerate}

\begin{remark}
\label{lattice_cover}
According to Proposition \ref{commensurable}, if $\overline{P}$ is a polygon which is tiled by one of the polygons $P$ in the list, such 
that the covering map $\pi: M_{\overline{P}} \to M_{P}$ is branched only over periodic points, then $\overline{P}$ has the lattice property.
\end{remark} 

\begin{thm}
\label{main}
There is no appropriate cover $\pi: M_Q \to M_P$ 
with P in the list.
\end{thm}

In order to prove this theorem, we will follow the list in $\S 4.1$ and show in each case that there is no appropriate cover. For each polygon $P_i$ in the list we will check two kinds of appropriate covers. These two cases will be referred to as appropriate covers of the first and second class respectively. 

\begin{enumerate}
\item $\pi:M_Q \to M_{P_i}$ where $P_i$ is one of the polygons in the list above.
\item $\pi:M_Q \to M_{\overline{P}}$ where $M_{\overline{P}}$ covers $M_{P_i}$ as in Remark \ref{lattice_cover}.
\end{enumerate}
Note that if all the points in the surface $M_P$ corresponding to the vertices in $P$ are periodic, we could not find an appropriate cover of neither kind.

\newpage
\section{Preliminary Lemmas}

\begin{lem}
\label{fp_of_G_Q}
Let $M_P$ and $M_Q$ be translation surfaces as in the construction in $\S 3.5.2$, such that $\pi :M_Q \to M_P$ is a branched covering map, where the branch locus is a single point $z_0 \in M_P$. Then $z_0$ is a fixed point of $G_Q$.
\end{lem}
\begin{proof}
As we mentioned in $\S 3.3$, $G_Q \subseteq \text{Aff}(M_Q)$. Since $G_Q < G_P$ we have $G_Q \subseteq \text{Aff}(M_P)$. Then, for all $g \in G_Q$, we have the following diagram:
\begin{equation*}
\begin{matrix}
\ M_Q & \xrightarrow{\quad g \quad} & M_Q \\
\pi\Big\downarrow & \ & \Big\downarrow\pi \\
\ M_P & \xrightarrow[\quad g \quad]{} & M_P
\end{matrix}
\end{equation*}
Hence, the set of branch points in $M_P$ is $G_Q$-invariant. Therefore, if $z_0\in M_P$ is a single branch point, it is a fixed point of $G_Q$.
\end{proof}

\begin{remark}
\label{remark_fp_of_G_Q}
In fact, if we denote $H=\{\ h\in G_P \ | \ hM_Q=M_Q \}$, then the branch point must be a fixed point of the group $\langle G_Q, H\rangle$. 
Moreover, if we want to branch over a single point $z_0 \in M_P$, which is not a fixed point of $h \in G_P$, then $h \notin G_Q$.
\end{remark}

\begin{lem}
\label{finite_fixed_points}
Let $\varphi \in \text{Aff}(M)$ such that $D\varphi \in SL_2^\pm(\R)$ is elliptic or hyperbolic. 
Then $\varphi$ has only a finite number of fixed points.
\end{lem}

\begin{proof}
Suppose by contradiction that $\varphi$ has an infinite number of 
fixed points in $M$. Since $M$ is compact, there exists a Cauchy 
sequence of fixed points: 
$\{x_n\}_{n=1}^{\infty}\subset M\setminus \Sigma_M$, with 
$\varphi(x_n)=x_n$ for all $n$. In particular, for all $\delta>0$, 
there exist $n, m \in \N$ such that $d(x_n,x_m)<\delta$.

M is compact, therefore we can cover $M \setminus \Sigma_M$ with 
finitely may sets $\{U_i\}_{i=1}^k$ such that, for any $x, y \in U_i$, 
there exists a geodesic from x to y. Let $r$ be the distance such that if 
$x,y \in M$ with $d(x,y)<r$, then there exists one geodesic on $M$ 
connecting between $x$ and $y$ of length less than $r$. The 
compactness of $M$ implies that $r$ can be chosen uniformly. 
Let $K$ be a Lipschitz constant of $\varphi$, and 
$A=D\varphi \in SL_2^{\pm}(\R)$. 

Define $\delta_0= \frac{r}{K}$. There exist $x_{n_0}, x_{m_0} \in U_{i_0}$ 
such that $d(x_{n_0},x_{m_0})<\delta_0$. 
Let $\gamma: [0,1] \to M$ be the geodesic such that 
$\gamma(0)=x_{n_0}$ and $\gamma(1)=x_{m_0}$.
There exists a map ($U_{i_0}, \psi_{i_0}$) such that 
$\gamma \subset U_{i_0}$ and 
$\psi_{i_0}(\gamma)=v \in \R^2$. 
\pagebreak

Consider $\varphi(\gamma)$: 
\begin{itemize}
\item [-] $\varphi(\gamma(0))=\varphi(x_{n_0})=x_{n_0}=\gamma(0)$
\item [-] $\varphi(\gamma(1))=\varphi(x_{m_0})=x_{m_0}=\gamma(1)$
\item [-] $\forall t_1, t_2 \in [0,1]$: \ 
		$d\left(\varphi(\gamma(t_1), \varphi(\gamma(t_2)\right) \leq 
			K \cdot d\left(\gamma(t_1), \gamma(t_2)\right) \leq
			K \cdot d(x_{n_0}, x_{m_0}) < K \cdot \delta_0 < r $, 
\end{itemize}
This implies that $\varphi(\gamma)=\gamma$. Since $\varphi \in \text{Aff}(M)$
with $A=D\varphi$ we get $\psi_\alpha(\varphi(\gamma))=A \cdot \psi_\alpha(\gamma)$.
Hence, we have: 
\[v=\psi_\alpha(\gamma)=\psi_\alpha(\varphi(\gamma))=A \cdot \psi_\alpha(\gamma)=A\cdot v\]
Consequently, $v$ is eigenvector of $A$. This is possible only if $A$ is 
parabolic. \newline A contradiction.
\end{proof}

\begin{lem}
\label{involution}
Let $M$ be a translation surface and $z_0 \in M$, and let $\sigma \in \text{Aff}(M)$ such that $\sigma(z_0)=z_0$
and $D(\sigma)=-Id\in \Gamma_M$. Then $z_0$ is periodic.
\end{lem}
\begin{proof}
Denote $A=\{z \in M \ | \ \exists \gamma \in \text{Trans}(M) \ \text{such that}\ \gamma\sigma z=z\}$. Trans(M) is finite and $D(\gamma\sigma)=D(\sigma)=-Id$, hence by Lemma \ref{finite_fixed_points} $A$ is finite. Let $\psi \in \text{Aff}(M)$, we will show that $\psi(z_0) \in A$, and since $A$ is finite, the claim is obtained. Since $D(\sigma)=-Id$, we get $D(\psi\sigma)=D(\psi)D(\sigma)=D(\sigma)D(\psi)=D(\sigma\psi)$. Therefore there exists $\gamma_0 \in ker(D)$ such that $\psi\sigma=\gamma_0\sigma\psi$. Now, $\psi(z_0)=\psi(\sigma(z_0))=\gamma_0\sigma\psi(z_0)$ which implies that $\psi(z_0) \in A$.
\end{proof}

\begin{Remark}
Lemma \ref{involution} improves Theorem 10 of \cite{GHS}.
Similar arguments show that the set of all the fixed points 
of all maps in $\text{Aff}(M)$ with derivative $-Id$ is 
$\text{Aff}(M)$-invariant. This set contains the set of 
Weierstrass points. Therefore, Theorem 10 of \cite{GHS}
is obtained without the assumption that $\text{Aff}(M)$ 
is generated by elliptic elements.
\end{Remark}

The next corollary immediately follows from the previous lemmas.

\begin{cor}
\label{-id_in_G_Q}
Let $M_P$ and $M_Q$ be translation surfaces as in the construction in $\S 3.5.2$, such that $\pi :M_Q \to M_P$ is a branched covering map, where the branch locus is a single point $z_0 \in M_P$. If $-Id \in G_Q$, then $z_0$ is a periodic point.
\end{cor}

The following corollary is obtained from Corollary \ref{-id} and the last Corollary \ref{-id_in_G_Q}.

\begin{cor}
\label{not_appropriate}
Let $P$ and $Q$ be polygons such that $P$ tiles $Q$ by reflections. If $Q$ has an even angle or if there exist two external sides with even angle between them, then $\pi: M_Q \to M_P$ is not an appropriate cover.
\end{cor}

\begin{lem}
\label{non-periodic_points}
The following points are non-periodic points:
\begin{enumerate}
\item The points corresponding to the angle $\frac{\pi}{n}$ in the surface obtained from the triangle with angles 
		$\left(\frac{\pi}{2}, \frac{\pi}{n}, \frac{(n-2)\pi}{2n} \right)$, $n\geq 5$ and odd.
\item The points corresponding to the angle $\frac{\pi}{3}$ in the surface obtained from the triangle with angles 
		$\left(\frac{\pi}{4}, \frac{\pi}{3}, \frac{5\pi}{12}\right)$.
\item The points corresponding to the angle $\frac{\pi}{3}$ in the surface obtained from the triangle with angles 
		$\left(\frac{2\pi}{9}, \frac{\pi}{3}, \frac{4\pi}{9}\right)$.
\item The points corresponding to the angles $\frac{\pi}{5}$ and $\frac{\pi}{3}$ in the surface obtained from the 
		triangle with angles $\left(\frac{\pi}{5}, \frac{\pi}{3}, \frac{7\pi}{15}\right)$.
\item The points corresponding to the angle $\frac{\pi}{n}$ in the surface obtained from the triangle with angles 
		$\left(\frac{\pi}{2n}, \frac{\pi}{n}, \frac{(2n-3)\pi}{2n}\right)$, $n\geq 5$ and odd.
		\end{enumerate}
\end{lem}

\begin{figure}
	\includegraphics[scale=0.73]{surfaces_lemma.eps}
	\caption{The surfaces in Lemma \ref{non-periodic_points}}
  	\label{surfaces_lemma}
\end{figure}

\begin{proof}
According to Remark \ref{non-periodic}, we will show that these points irrationally split the height of a cylinder of the surface. If there are two points in $M_P$ corresponding to the same angle in $P$, then both are periodic or both are non-periodic (since $G_P \subset \text{Aff}(M_P)$ swaps these points). Therefore, it is sufficient to verify this condition for only one of the points.
\begin{enumerate}
\item The surface $M_P$ obtained from the triangle with angles $\left(\frac{\pi}{2}, \frac{\pi}{n}, \frac{(n-2)\pi}{2n} \right)$ $n\geq 5$ and 
	odd,is the double regular n-gon with parallel sides identified (see Figure \ref{surfaces_lemma}, surface 1). The points corresponding to 
	the angle $\frac{\pi}{n}$ are non-periodic. This result is not new (see\cite{HS covering}, Proposition 3).
\item Denote the vertices of the triangle $P$ with angles $\frac{\pi}{3}$, $\frac{\pi}{4}$ and $\frac{5\pi}{12}$ by $a$, $b$ and $c$ 
	respectively. $M_P$ is a surface of genus 3 with one singular point which corresponds to the vertex $c$ in $P$ (see Figure 
	\ref{surfaces_lemma}, surface 2).

	In order to calculate the height ratio we will look at the cylinder decomposition of the surface in the horizontal direction, 
	and normalize one of the triangle sides to unity, as described in Figure \ref{kesm12_periodic}.
		\begin{figure}[h!]
		\begin{center}
		\includegraphics[scale=0.52]{kesm12_periodic.eps}
		\caption{Cylinder decomposition in the horizontal direction of 
			the surface $M_P$ obtained from $P=\left(\frac{\pi}{4}, \frac{\pi}{3}, \frac{5\pi}{12}\right)$ -
			calculations for the point corresponding to $\frac{\pi}{3}$.}
		\label{kesm12_periodic}
		\end{center}
		\end{figure}

		The following calculation were made:
		\[h=\sin(\frac{\pi}{4})=\frac{1}{\sqrt{2}} \quad ; \quad 
		\frac{x}{\sin(\frac{\pi}{4})}=\frac{1}{\sin(\frac{\pi}{3})} \quad
			\Longrightarrow \quad x=\frac{\sqrt{2}}{\sqrt{3}}\]
		\[\frac{h_1}{\sin(\frac{\pi}{6})}=x \quad \Longrightarrow	\quad
			h_1=x\cdot \sin(\frac{\pi}{6})=\frac{\sqrt{2}}{\sqrt{3}}\cdot\frac{1}{2}\]
		Consequently, we get the irrational ratio: $\frac{h_1}{h}=\frac{1}{\sqrt{3}}\notin \Q$. 
		By Remark \ref{non-periodic}, any point corresponding to the angle $\frac{\pi}{3}$ in $P$, is non-periodic.
		\begin{Remark}
		Additional examination showed that this point is a connection point.

\end{Remark}

\item Denote the vertices of the triangle $P$ with angles $\frac{2\pi}{9}$, $\frac{\pi}{3}$ and $\frac{4\pi}{9}$ by $a$, $b$ and $c$ 
	respectively. Here $M_P$ is a surface of genus 3 with 2 singular points, corresponding to the vertices $a$ and $c$ in $P$ (see Figure 
	\ref{surfaces_lemma}, surface 3).

	In order to calculate the height ratio we will look at the cylinder decomposition of the surface in the horizontal direction, 
	and normalize one of the triangle sides to unity, as described in Figure \ref{kesm9_periodic}.
	\begin{figure}[h!]
	\begin{center}
	\includegraphics[scale=0.7]{kesm9_periodic.eps}
	\caption{Cylinder decomposition in the horizontal direction of 
				the surface $M_P$ obtained from $P=\left(\frac{2\pi}{9}, \frac{\pi}{3}, \frac{4\pi}{9}\right)$ - 
				calculations for the point corresponding to $\frac{\pi}{3}$.}
	\label{kesm9_periodic}			
	\end{center}
	\end{figure}

	The following calculations were made:
	\[AB=2\cdot \sin(\frac{\pi}{3}) = \sqrt{3} \quad ; \quad
	\measuredangle{B}=\frac{8\pi}{9} \quad ; \quad 
	h_1=\cos(\frac{\pi}{3})=\frac{1}{2} \in \Q\]
	Therefore, according to Remark \ref{non-periodic}, since $h_1 \in \Q$, it remains to show that $h \notin \Q$.
	We calculate $h$ by comparing two different formulas for the area of triangle $\vartriangle{ABC}$ as follows:
	\[\frac{1}{2}\cdot h \cdot AB = \frac{1}{2} \cdot AB\cdot AB \cdot \sin(B) 
	\quad \Longrightarrow \quad h=AB \cdot \sin(B)=\sqrt{3} \cdot \sin(\frac{\pi}{9})\]
	We will show $h$ is a root of the polynomial $8x^3-18x+9$, whose roots are irrational. Using the trigonometric identity:
	\[\sin^3x=\frac{3\sin x-\sin 3x}{4}\] 
	We get: \[\sin^3(\frac{\pi}{9})=\frac{3\cdot\sin(\frac{\pi}{9})-\sin(\frac{\pi}{3})}{4}\]
	Hence, by substituting in the polynomial above we get:
	\[8 \cdot (\sqrt{3})^3 \cdot \sin^3\left(\frac{\pi}{9}\right) - 18 \cdot \sqrt{3} \cdot \sin\left(\frac{\pi}{9}\right) + 9 =
	8\cdot 3 \cdot \sqrt{3} \cdot \frac{1}{4} \cdot \left(3\cdot\sin\left(\frac{\pi}{9}\right)-\sin\left(\frac{\pi}{3}\right)\right)-18 \cdot \sqrt{3} \cdot \sin\left(\frac{\pi}{9}\right) +9 = \]
\[=18 \cdot \sqrt{3} \cdot \sin\left(\frac{\pi}{9}\right)-6\cdot \sqrt{3} \cdot \frac{\sqrt{3}}{2} -18 \cdot \sqrt{3} \cdot \sin\left(\frac{\pi}{9}\right)+9=0\]
	By the \textit{rational root test}, if $\frac{p}{q} \in \Q$ is a root of the polynomial above, than $p \mid 9$ and $q \mid 8$, 
	i.e. $\frac{p}{q} \in \{1, 3, \frac{1}{2}, \frac{1}{4}, \frac{3}{2}, \frac{3}{4} \}$.
	One can check that none of these rational numbers is a root of this polynomial, hence $h\notin \Q$.

\item Denote the vertices of the triangle $P$ with angles $\frac{\pi}{3}$, $\frac{7\pi}{15}$ and $\frac{\pi}{5}$ by $a$, $b$ and $c$ 
	respectively. $M_P$ is a surface of genus 4, with one singular point corresponding to the vertex $b$ in $P$ (see Figure 
	\ref{surfaces_lemma}, surface 4).

	\textbf{First we will examine the points corresponding to the vertex $c$:}

	In order to calculate the height ratio we will look at the cylinder decomposition of the surface in the horizontal direction, 
	and normalize one of the triangle sides to unity, as described in Figure \ref{kesm15_periodic5}.
	\begin{figure}[h!]
	\begin{center}
	\includegraphics[scale=0.54]{kesm15_periodic5.eps}
	\caption{Cylinder decomposition in the horizontal direction of 
			the surface $M_P$ obtained from $P=\left(\frac{\pi}{5}, \frac{\pi}{3}, \frac{7\pi}{15}\right)$ - 
			calculations for the point corresponding to $\frac{\pi}{5}$.}
	\label{kesm15_periodic5}
	\end{center}
	\end{figure}

	Considering triangle $C$, we have: \ $h_1=\sin\left(\frac{\pi}{10}\right)$. We calculate $h$ by comparing two different formulas for 
	the area of triangle $B$ as follows:
	\begin{equation}
	\label{h}
	\frac{1}{2} \cdot h \cdot u = \frac{1}{2} \cdot v \cdot (z-x) \cdot \sin\left(\frac{7\pi}{15}\right)
		\quad \Longrightarrow \quad h=\frac{v\cdot (z-x) \cdot \sin\left(\frac{7\pi}{15}\right)}{u}
	\end{equation}
	
	The lengths of $z$ and $y$ are calculated from within the initial triangle 
	$\left(\frac{\pi}{5}, \frac{\pi}{3}, \frac{7\pi}{15}\right)$ as follows:
	\begin{equation}
	\label{y_z}
	z=\frac{\sin\left(\frac{\pi}{5}\right)}{\sin\left(\frac{\pi}{3}\right)} \quad ; \quad
		y=\frac{\sin\left(\frac{7\pi}{15}\right)}{\sin\left(\frac{\pi}{3}\right)}
	\end{equation}
	The length of $x$ is calculated from within triangle $A$ as follows: \;
	\[x=(1+y) \cdot \frac{\sin\left(\frac{\pi}{30}\right)}{\sin\left(\frac{19\pi}{30}\right)}\]
	The lengths of $v$ and $u$ are calculated from within triangle $B$:
	\[v=(z-x) \cdot \frac{\sin\left(\frac{11\pi}{30}\right)}{\sin\left(\frac{\pi}{6}\right)} \quad ; \quad
	u=(z-x) \cdot \frac{\sin\left(\frac{7\pi}{15}\right)}{\sin\left(\frac{\pi}{6}\right)}\]
	Substituting in (\ref{h}) we get:
	\[h=\frac{v}{u} \cdot (z-x) \cdot \sin\left(\frac{7\pi}{15}\right)=
		\frac{\sin\left(\frac{11\pi}{30}\right)}{\sin\left(\frac{\pi}{6}\right)} \cdot \frac{\sin\left(\frac{\pi}{6}\right)}{\sin\left(\frac{7\pi}{15}\right)} \cdot (z-x) \cdot \sin\left(\frac{7\pi}{15}\right)=
\sin\left(\frac{11\pi}{30}\right) \cdot (z-x)=\]
\[\sin\left(\frac{11\pi}{30}\right) \cdot \left(\frac{\sin\left(\frac{\pi}{5}\right)}{\sin\left(\frac{\pi}{3}\right)}-(1+a) \cdot \frac{\sin\left(\frac{\pi}{30}\right)}{\sin\left(\frac{19\pi}{30}\right)}\right)=
\sin\left(\frac{11\pi}{30}\right) \cdot \left[\frac{\sin\left(\frac{\pi}{5}\right)}{\sin\left(\frac{\pi}{3}\right)}-\left(1+\frac{\sin\left(\frac{7\pi}{15}\right)}
{\sin\left(\frac{\pi}{3}\right)}\right) \cdot \frac{\sin\left(\frac{\pi}{30}\right)}{\sin\left(\frac{19\pi}{30}\right)}\right]\]

	After simplifying the expressions we get: $\frac{h}{h_1}=\frac{1}{10}\left(5+\sqrt{75-30\sqrt{5}}\right) \notin \Q$.
	Therefore, according to Remark \ref{non-periodic}, the points correspond to the angle $\frac{\pi}{5}$ are non-periodic points.
\\ \\
	\textbf{Now, we will do the respective calculations for the points corresponding to the angle $\frac{\pi}{3}$}, 
				according to the markings in Figure \ref{kesm15_periodic3}:
	\begin{figure}[h!]
	\begin{center}
	\includegraphics[scale=0.5]{kesm15_periodic3.eps}
	\caption{Cylinder decomposition in the horizontal direction of 
			the surface $M_P$ obtained from $P=\left(\frac{\pi}{5}, \frac{\pi}{3}, \frac{7\pi}{15}\right)$ - 
			calculations for the point corresponding to $\frac{\pi}{3}$.}.
	\label{kesm15_periodic3}
	\end{center}
	\end{figure}
First, we will calculate the height $h=h_A$ (where $h_A$ is the height of the triangle $A$):
\[h=h_A=x \cdot \sin\left(\frac{3\pi}{10}\right)\]
Where $x$ is calculated as follows:
\[x=z-w=\frac{\sin\left(\frac{\pi}{5}\right)}{\sin\left(\frac{\pi}{3}\right)}-
z\cdot \frac{\sin\left(\frac{\pi}{30}\right)}{\sin\left(\frac{3\pi}{10}\right)}\]
Second, we calculate the height $h_1$. As described in Figure \ref{kesm15_periodic3} ($y$ and $z$ as in (\ref{y_z})):
\[h_1=h_B+h_C \quad \text{where} \quad h_B=(y+1)\cdot \sin\left(\frac{\pi}{30}\right) \quad \text{and} \quad
h_C=z\cdot \sin\left(\frac{\pi}{30}\right)\]
Therefore, the wanted ratio is \; $\frac{h_1}{h}=\frac{h_B+h_C}{h}$,\; which simplifies to: \;$\frac{1}{2}(-1+\sqrt{5})\notin \Q$.
\item The surface $M_P$ obtained from the triangle $P$ with angles$\left(\frac{\pi}{2n}, \frac{\pi}{n}, \frac{(2n-3)\pi}{2n}\right)$ $n\geq 5$ 
	and odd, is illustrated in Figure \ref{surfaces_lemma} (surface 5). Since the points corresponding to the angle $\frac{\pi}{n}$
	are the centers of the regular n-gons, the same arguments as in the first part of the proof, imply that these points are non-periodic.
\end{enumerate}
\end{proof}

\newpage

\section{Proof of Theorem 4.5}

Some notations:
\begin{itemize}
\item $N_P$, $N_Q$ - The least common multiples of the denominators of the angles in $P$ and $Q$ respectively.
\item $M_P$, $M_Q$ - The surfaces obtained from billiard in $P$ and $Q$ respectively.
\end{itemize}

\begin{enumerate}
\item \textbf{\boldmath{$P$} is a regular polygon:}\\
	In this case all the vertices are singular points of the surface, and thus periodic. Since we want a single non-periodic branching 
	point, there is no appropriate cover.
\item  \textbf{\boldmath{$P$} is a right triangle with angles $\left(\frac{\pi}{n}, \frac{(n-2)\pi}{2n}, \frac{\pi}{2}\right)$, $n\geq 4$.}
	\begin{itemize}
	\item [a)] \textbf {If \boldmath{$n$} is even:}
		$M_P$ is the regular $n$-gon with parallel sides identified (see Figure \ref{octagon}).
		\begin{figure}[h!]
		\begin{center}
		\includegraphics[scale=0.31]{octagon.eps}
		\caption{The octagon - $M_P$ for n=8}
		\label{octagon}			
		\end{center}
		\end{figure}
		All the points in $M_P$ corresponding to vertices in $P$ are periodic. This result is not new (see \cite{GHS}, Corollary 9), 
		but we will show another proof: All the points in $M_P$ corresponding to the vertices in $P$ are fixed points of a rotation by $\pi$.
		Hence, by Lemma \ref{involution}, these are all periodic points. Since we want a single non-periodic branch point, there is no 
		appropriate cover.
	\item [b)] \textbf {If \boldmath{$n$} is odd:}
		$M_P$ is the double regular n-gon with parallel sides identified (see Figure \ref{surfaces_lemma}, surface 1). $M_P$ has one singular 
		point corresponding to the angle $\frac{(n-2)\pi}{2n}$ in $P$. According to Lemma \ref{involution}, the midpoints of the sides of $M_P$
		(the points corresponding to the angle $\frac{\pi}{2}$) are periodic points, since they are fixed points of the rotation by $\pi$. 
		Following Lemma \ref{non-periodic_points}, the two centers of $M_P$, which correspond to the angle $\frac{\pi}{n}$, are non-periodic 
		points. Therefore, when we look for an appropriate cover, we actually look for a surface branched over one of the centers of $M_P$.

		($\star$) Notice that, considering Remark \ref{remark_fp_of_G_Q}, since these centers are not fixed points of the reflections with 
		respect to the sides of the regular n-gons, these reflections should not be in $G_Q$. Therefore, for an appropriate cover, 
		these sides must be internal in $Q$. 

		In the beginning, we will consider the first class of appropriate covers (as mentioned in page 12), i.e. a branched 
		cover \text{$\pi:M_Q \to M_P$}, where the branch locus is a single non-periodic point in $M_P$, meaning one of the centers of $M_P$. 
		First, according to Corollary \ref{not_appropriate}, for such a cover $N_Q$ must be odd. Second, we want that the singular point, 
		which is a periodic point, to be a regular point of the cover. Hence, following Lemma \ref{branching}, all the angles of the vertices 
		in $Q$, which correspond to the angle $\frac{(n-2)\pi}{2n}$ ($n$ is odd $\Longrightarrow (n-2,2n)=1$) must be 
		$k \cdot\frac{(n-2)\pi}{2n} \leq 2\pi$ with $k\mid 2n$ .
		\begin{itemize}
		\item $k \cdot\frac{(n-2)\pi}{2n} \leq 2\pi$ implies $k \in \{1,2,\ldots, 5\}$ for $n\geq 7$ and odd, 
			and $k \in \{1,2,\ldots, 6\}$ for $n=5$. 
		\item The requirement for odd $N_Q$ forces $k$ to be even. Hence $k \in \{2, 4\}$ for $n\geq 7$ and odd, and $k \in \{2, 4, 6\}$ 
			for $n=5$. 
		\item Finally, the requirement $k \mid 2n$ reduces the possibilities to $k=2$. 
		\end{itemize}
		($\clubsuit$) Consequently, each angle $2 \cdot \frac{n-2}{2n}\pi$ in $Q$ must be delimited by 2 external sides 
		(since we cannot expand it). 

		We will start constructing a polygon $Q$, under all the requirements above, step by step, as described in Figure \ref{steps}.
		\begin{figure}[h!]
		\begin{center}
		\includegraphics[scale=0.41]{double_pentagon_2b.eps}
		\caption{Steps of the construction of Q}
		\label{steps}
		\end{center}
		\end{figure}
		\textbf{Step 1:} Without loss of generality, we start with triangle number 1. The broken line indicates a side that must be internal 
			according to ($\star$). Therefore we add triangle number 2. Following ($\clubsuit$) we add the bold lines which indicate external 
			sides. Since the polygon in this step (triangles 1 and 2) determines a lattice surface (number 4 in the list), we need to enlarge 
			$Q$ and continuing to the next step. \quad
		\textbf{Step 2:} Without loss of generality we add triangle number 3. \quad
		\textbf{Step 3:} Again, as in step 1, the broken line indicates a side that has to be internal, therefore we add triangle number 4. 
			By ($\clubsuit$) we add the bold lines as in step 1. Now, since all sides are external, the construction is complete. \quad 
		According to Proposition \ref{branching}, the cover $\pi:M_Q \to M_P$, which corresponds to this polygon $Q$, is branched over two 
		different points corresponding to the centers of $M_P$. Therefore, there is no such appropriate cover.

		Up to this point, we have shown that there is no appropriate cover of the first class. Next, we will examine the possibility of finding 
		an appropriate cover of the second class. Therefore, we will look for:
		\begin{itemize}
		\item [\textbullet] A polygon $\overline{P}$ which is tiled by reflections of the triangle 
			$P=\left(\frac{\pi}{n}, \frac{(n-2)\pi}{2n}, \frac{\pi}{2}\right)$ $n\geq 5$ and odd, such that the covering map 
			$\pi:M_{\overline{P}} \to M_P$ is branched only over periodic points.
		\item [\textbullet] A polygon $Q$ tiled by $\overline{P}$ such that the covering map $\pi:M_Q \to M_{\overline{P}}$ \ 
			is branched over a single non-periodic point.
		\end{itemize}
		($\Diamondblack$) Notice that:
		\begin{itemize}
		\item If $\overline{P}$ has an angle $\alpha > \pi$, then any such polygon $Q$, must have this angle $\alpha$ as well. Therefore, If 
			$\overline{P}$ has an even angle greater than $\pi$, according to Corollary \ref{not_appropriate}, it will not yield an 
			appropriate cover.
		\item If $\overline{P}$ has two external sides with an angle greater than $\pi$ between them, then any such polygon $Q$, must have these 
			external sides as well. Therefore, If these sides are the sides of the regular n-gon, according to ($\star$) it will not yield an 
			appropriate cover.
		\end{itemize}

		As we mentioned before, except for the centers of $M_P$, all the points corresponding to the vertices in $P$, are periodic points
		of $M_P$. Following Proposition \ref{branching}, branching over a point corresponding to an angle $\frac{\pi}{2}$ implies 
		$\overline{P}$ has an angle $\frac{3\pi}{2}$. In that case, since $\frac{3\pi}{2}>\pi$, any polygon $Q$ which is tiled by 
		$\overline{P}$, will have this angle as well. Consequently, $N_Q$ will be even and by Corollary \ref{not_appropriate} it will 
		not be an appropriate cover. Therefore, if there exists an appropriate cover of the second class, then $M_{\overline{P}}$ will branch 
		only over the singular point of $M_P$.
		According to Proposition \ref{branching}, a branching over the singular point of $M_P$ will occur if and only if $\overline{P}$ 
		has an angle $k\cdot \frac{n-2}{2n}\pi<2\pi$ with $k\nmid 2n$. Therefore, we need to check the following possibilities for 
		appearance of an angle $k\cdot \frac{n-2}{2n}\pi$ in $\overline{P}$:
		\begin{itemize}
			\item $k \in \{3,4,6\}$  \; for $n=5$.
			\item $k \in \{3,4,5\}$  \; for $n=7$.
			\item $k \in \{4,5\}$  \; for $n=9$.
			\item $k \in \{3,4\}$  \; for $n\geq 11$ and $3\nmid n$.
			\item $k=4$  \; for $n\geq 11$ and $3\mid n$.
		\end{itemize}
		The case of $n=5$ will be examined in the sequel.

		For $n\geq 7$ and odd, the angle $k\cdot \frac{n-2}{2n}\pi$ with $k=3,5$ is even and greater than $\pi$. Hence, following 
		($\Diamondblack$) it will not give an appropriate cover. Therefore, for $n\geq 7$ and odd, it remains to check the possibility for 
		an angle $4\cdot \frac{n-2}{2n}\pi$ in $\overline{P}$. Such an angle can appear in $\overline{P}$ in two ways, as shown in Figure 
		\ref{4overlineP} (the bold lines indicate external sides for $\overline{P}$). Following ($\Diamondblack$), the left option in the 
		figure cannot occur since $4\cdot \frac{n-2}{2n}\pi > \pi$, and the bold lines are sides of the regular n-gon.
		\begin{figure}[h!]
		\begin{center}
		\includegraphics[scale=0.4]{right_9_4.eps}
		\caption{Two options to start the construction of $\overline{P}$	
					with the angle $4\cdot \frac{n-2}{2n}\pi$}				
		\label{4overlineP}
		\end{center}				
		\end{figure}
		
		It remains to check if it is possible to construct suitable $\overline{P}$ and $Q$, beginning with the right option in Figure 
		\ref{4overlineP}. We will start the construction under the requirements, step by step, as described in Figure 
		\ref{constructing_overlineP}.
		\begin{figure}[h!]
		\begin{center}
		\includegraphics[scale=0.52]{building_overlineP9.eps}
		\caption{Constructing $\overline{P}$}
		\label{constructing_overlineP}
		\end{center}
		\end{figure}

		\textbf{Step 1:} According to Proposition \ref{branching}, in order to prevent a branching over a non-periodic 
			point, we need to fix the angle at the vertex $a$. Therefore we add triangle number 1. \quad
		\textbf{Step 2:} Following ($\Diamondblack$), since the angle of the vertex $b$ is even and greater than $\pi$, 
			we need to fix that angle by adding triangle number 2. \quad 
		\textbf{Step 3:} As in step 1, we fix the angle at vertex $c$ by adding triangle number 3. \quad 
		\textbf{Step 4:} Since the angle $3 \cdot \frac{n-2}{2n}$ of vertex $d$ is even and greater than $\pi$, according to 
			($\Diamondblack$), we have to enlarge it to $4 \cdot \frac{n-2}{2n}$ by adding triangle number 4.

		According to Propositions \ref{branching} and \ref{commensurable}, the polygon in the last step has the lattice property if and only 
		if $3\mid n$. In that case, we will try to construct a suitable polygon $Q$ tiled by this polygon. Without loss of generality, we 
		reflect $\overline{P}$ in the broken line as described in Figure \ref{overlineP_to_Q}. Consequently, according to Remark 
		\ref{gluing}, $Q$ have 2 singular vertices, $v$ and $u$, with angles $\frac{2\pi}{n}$ and $\frac{6\pi}{n}$ respectively. 
		These vertices are corresponding to the two centers in $M_P$. Since we cannot fix these angles by reflecting $\overline{P}$ 
		again (as can be shown in the figure), by Proposition \ref{branching}, the respective cover will have two branch points corresponding 
		to the two centers of $M_P$. Hence, it will not be an appropriate cover. Therefore, we will try expand the construction of 
		$\overline{P}$ after the last step in order to find an appropriate cover.
		\begin{figure}[h!]
		\begin{center}
		\includegraphics[scale=0.49]{overlineP_to_Q.eps}
		\caption{$Q$ tiled by reflections of $\overline{P}$}				
		\label{overlineP_to_Q}
		\end{center}				
		\end{figure}

		The continuing of the construction is described in Figure \ref{continuing_overlineP}.
		\textbf{Step 1:} Without loss of generality, we add triangle number 5. Following the arguments in ($\Diamondblack$), 
			we must add triangle number 6. \quad
		\textbf{Step 2:} According to Remark \ref{gluing}, this polygon has 2 singular vertices corresponding to the two centers in $M_P$, 
			$a$ and $b$, with angles $\frac{2\pi}{n}$ and $\frac{4\pi}{n}$ respectively. 	Therefore, according to Proposition \ref{branching}, 
			a surface obtained from this polygon will have 2 different non-periodic branch points corresponding to the centers of $M_P$. 
			Hence, we must fix these angles by adding triangles number 7 and 8. By ($\Diamondblack$), in that case, it cannot yield an 
			appropriate cover.
		\begin{figure}[h!]
		\begin{center} 
		\includegraphics[scale=0.76]{continuing_building_overlineP9.eps}
		\caption{Continuing the construction of $\overline{P}$}
		\label{continuing_overlineP}
		\end{center}				
		\end{figure}
		
		Up to this point we have shown that for any $n \geq 7$ and odd, there is no appropriate cover. It remains to check the case of $n=5$.
		According to Proposition \ref{branching}, since we want $M_{\overline{P}}$ to branch over periodic points in $M_P$, all the vertices 
		in $\overline{P}$ corresponding to the angle $\frac{\pi}{5}$, have to appear with angles $\frac{\pi}{5}$, $\pi$ or $2\pi$. ($\spadesuit$)

		Since we are interested in a polygon $Q$, tiled by $\overline{P}$, such that $M_Q$ is branched over a single non-periodic point, 
		we must have a vertex with angle $\frac{\pi}{5}$ in $\overline{P}$. Hence, we start the construction with triangle number 1 with two 
		external sides (as described in Figure \ref{pentagon}). The following steps were taken:

		\textbf{Step 1:} The only way to continue is by adding triangle number 2 with an external side. This side must be external, otherwise,
			adding a triangle to the left of triangle 2, leads to an angle of $\frac{3\pi}{2}$. As explained before, such an angle cannot appear 
			in $\overline{P}$. Since the sides of triangle number 1 are external, we cannot enlarge it to $2\pi$. \quad
			The polygon in this step (triangles 1 and 2) determines a lattice surface which we treated in the first part of this section 
			(case 2b, appropriate cover of the first class). Hence we need to enlarge $Q$ and continue to the next step. \quad
 		\textbf{Step 2:} Adding triangle number 3. \quad
		\textbf{Step 3:} Following ($\spadesuit$), we need to fix the angle $\frac{2\pi}{5}$ of vertex $a$. The only way to do it, 
			is by enlarging it to $\pi$ by adding triangles 4, 5 and 6. \quad
		\begin{figure}[h!]
		\begin{center}
		\includegraphics[scale=0.5]{right_triangles_5.eps}
		\caption{The constructing of $\overline{P}$}
		\label{pentagon}
		\end{center}				
		\end{figure}

		Consequently, any suitable $\overline{P}$ must contain the last shape in Figure \ref{pentagon}. We will show that for any such 
		polygon $\overline{P}$, we cannot find a polygon $Q$ tiled by $\overline{P}$ that would yield an appropriate cover. 

		We have started the construction of $\overline{P}$ with an angle $\frac{\pi}{5}$ in order to multiply this angle to be
		$\frac{k\pi}{5}$ with $k>1$ and $(k,5)= 1$ in $Q$. That is, for having the wanted branching of the cover 
		$\pi:M_ Q\to M_{\overline{P}}$. Hence, to find a suitable polygon $Q$, we have to reflect with respect to at least one of the
		sides of that angle. We describes two possible reflections in Figure \ref{Q_from_overlineP}. The first reflection 
		cannot occur, since in this case we get two external sides, $A$ and $B$, that must be internal according to ($\star$).
		\begin{figure}[h!]
		\begin{center}
		\includegraphics[scale=0.5]{right_5_Q.eps}
		\caption{Constructing from $\overline{P}$ for n=5}
		\label{Q_from_overlineP}
		\end{center}				
		\end{figure}

		To show that the second reflection cannot occur as well, we will first explain why the side of triangle number 6 must be
		external in $\overline{P}$. The reflection with respect to the vertical direction multiplies the angle of the lower vertex 
		of triangle 6. Therefore, if this side is internal, the angle of vertex $a$ in $\overline{P}$ must be $2\cdot\frac{3\pi}{10}$ 
		or $3 \cdot\frac{3\pi}{10}$. We will examine these options as described in Figure \ref{external6}.
		\begin{itemize}
		\item If the angle is $2\cdot\frac{3\pi}{10}$, according to Proposition \ref{branching} with $\overline{P}$ playing the role of P 
			and $m_0=3$, $n_0=5$, the reflection with respect to the vertical direction will cause a branching of the cover 
			$\pi:M_Q \to M_{\overline{P}}$ over the singular point that corresponds to this vertex. Therefore, it will not yield an 
			appropriate cover.
		\item If the angle is $3\cdot\frac{3\pi}{10}$, such a reflection will cause two external sides of the regular n-gon. 
			Following ($\star$), it will not give an appropriate cover.
		\end{itemize}
		\begin{figure}[h!]
		\begin{center}
		\includegraphics[scale=0.55]{right5_external6.eps}
		\caption{The side in triangle number 6 has to be 
				external in $\overline{P}$}
		\label{external6}
		\end{center}				
		\end{figure}
	
		Hence, the angle of vertex $a$ must be $\frac{3\pi}{10}$, i.e. the side of triangle number 6 must be external in $\overline{P}$. 
		This implies an external side in triangle number 5 as well, as illustrated in the left of Figure \ref{right5_buildingQ}.

		Up to this point, we have shown that $\overline{P}$ must contain the polygon in the left of Figure \ref{right5_buildingQ}.
		We also showed that in order to find a suitable polygon $Q$, we must reflect $\overline{P}$ with respect to line $A$. 
		According to ($\star$), we must to reflect with respect to line $B$ as well. These two reflections multiply the angle of vertex 
		$a$ by 3. By Proposition \ref{branching}, it will cause a branching over the singular point in $M_P$. Therefore, also 
		for $n=5$, we cannot find an appropriate cover.

		\begin{figure}[h!]
		\begin{center}
		\includegraphics[scale=0.7]{right5_buildingQ.eps}
		\caption{There is no appropriate polygon}
		\label{right5_buildingQ}
		\end{center}				
		\end{figure}
\end{itemize}

\item \textbf{\boldmath{$P$} is an acute isoceles triangle with angles 
		$\left(\frac{(n-1)\pi}{2n}, \frac{(n-1)\pi}{2n}, \frac{\pi}{n}\right)$,
		$n\geq 3$.}\\
		$M_P$ is the regular $2n$-gon with parallel sides identified. As in case 2(a), all the points corresponding to vertices 
		in $P$ are periodic. Hence, there is no appropriate cover.

\item \textbf{\boldmath{$P$} is an obtuse isosceles triangle with angles
	$\left(\frac{\pi}{n}, \frac{\pi}{n}, \frac{(n-2)\pi}{n}\right)$, $n\geq 5$.}
	\begin{itemize}
	\item [a)] \textbf {If \boldmath{$n$} is even:}
		The surface obtained from the billiard in $P$ is a regular double cover (without branching) of the surface in 2(a), i.e.
		double $2n$-gon (see Figure \ref{double_octagon}). Therefore, if there is an appropriate branched cover of $M_P$, 
		it will be an appropriate one for the surface in 2(a), for which we proved there is no such cover.
		\begin{figure}[h!]
		\begin{center}
		\includegraphics[scale=0.38]{double_octagon1.eps}
		\caption{The double octagon, $M_P$ for n=8}				
		\label{double_octagon}
		\end{center}				
		\end{figure}

	\item [b)] \textbf {If \boldmath{$n$} is odd:}
		The surface obtained from the billiard in $P$ is the same surface as in 2(b). Therefore there is no appropriate cover.
	\end{itemize}

\item \begin{itemize}
	\item [a)] \textbf{\boldmath{$P$} is the acute scalene triangle with angles 
	$\left(\frac{\pi}{4}, \frac{\pi}{3}, \frac{5\pi}{12}\right)$.}\\
	Denote the vertices of $P$ by $a$, $b$ and $c$ as in the proof of Lemma \ref{non-periodic_points}. 
	$M_P$ is a surface of genus 3 with one singular point corresponding to the vertex $c$ in $P$ 
	(see Figure \ref{surfaces_lemma}, surface 2). Therefore, if there exists an appropriate cover, the branching must be over a regular point 
	corresponding to a vertex $a$ or $b$. According to Proposition \ref{branching}, if the branch point corresponds to a vertex $b$, 
	then $N_Q$ is even. In that case, according to Corollary \ref{not_appropriate}, we cannot find an appropriate cover. 
	Therefore, we should examine only the first option: The branch point corresponding to a vertex $a$. In this case, Lemma \ref{fp_of_G_Q} 
	implies that $G_Q$ must be isomorphic to $D_3$ (the only dihedral subgroup of $G_P$ that fixes these points). Therefore, all the 
	vertices in $Q$ must be of the form $\frac{k\pi}{3}$. In particular, this requirement implies that every angle of a vertex $c$ must be 
	multiplied by 4, and every angle of a vertex $b$ must be canceled, i.e. multiplied by 4 or 8. ($\star$)
	
	There are 2 possibilities for gluing together 4 vertices of type $c$ by reflections (see Figure \ref{four_c}). The bold lines 
	indicate exterior sides of the polygon (since we cannot expand the angle according to ($\star$)).
	\begin{figure}[h!]
	\begin{center}\qquad
	\includegraphics[scale=0.45]{kesm12_four_c.eps}
	\caption{Two options gluing together 4 triangles in vertex $c$}
	\label{four_c}
	\end{center}
	\end{figure}

	\textbf{First we will show that the left option in Figure \ref{four_c} cannot be contained in $Q$}.
	We will start constructing $Q$ with these four triangles, step by step, under above requirements.
	\begin{figure}[h!]
	\begin{center}
	\includegraphics[scale=0.67]{kesm12_building.eps}
	\caption{Steps of the construction of $Q$.}
	\label{building_Q}
	\end{center}
	\end{figure}

	\textbf{Step 1}: Canceling the angles of vertices $b$.\quad
	\textbf{Step 2}: Multiplying the angles of vertices $c$ by 4.\quad
	\textbf{Step 3}: Canceling the angles of the vertices $b$.\quad
	\textbf{Step 4}: The broken lines form an even angle with the exterior bold line. Following Corollary \ref{not_appropriate}, 
		these sides must be internal. Therefore, we have to add triangles number 1 and 2.

	At this point, the polygon contains another 4 triangles as those we began with (at the bottom). Retracing the same steps, we will get 
	infinitely many triangles at $Q$. Since we are interested in a finite cover of $M_P$, this cannot yield an appropriate cover. Therefore, 
	the left option in Figure \ref{four_c} is not contained in $Q$.

	\textbf{Now, we will show that the right option in Figure \ref{four_c}, cannot be contained in $Q$ as well}. Again, we will begin constructing $Q$ 
	with these four triangles. The following steps describe the construction of $Q$ (see Figure \ref{building_Q2}).
	\begin{figure}[h!]
	\begin{center}
	\includegraphics[scale=0.55]{kesm12_building2.eps}
	\caption{Steps of the construction of $Q$.}
	\label{building_Q2}
	\end{center}
	\end{figure}

	\textbf{Steps 1 and 2}: All the broken lines form an even angle with one of the exterior bold lines. Therefore, by Corollary 
		\ref{not_appropriate}, these sides must be internal.\quad
	\textbf{Step 3}: Adding triangles in order to multiply the angles of vertices $c$ by 4. This is the only way to do that, since 
		we have shown that the left option cannot be contained in $Q$.\quad 
		At that point, according to Propositions \ref{branching} and \ref{commensurable}, this polygon determines a lattice surface, 
		hence we need to enlarge $Q$. \quad	
	\textbf{Step 4}: Without loss of generality, we added triangles at one of the vertices $b$. As we required before, at each 
		vertex $b$ there are 4 or 8 triangles glued together. Therefore we added 4 triangles. \quad
	\textbf{Step 5}: Consequently, we have 3 vertices of type $a$, in each we have an angle $\frac{2\pi}{3}$. The corresponding points 
		to these vertices, are different points in $M_P$. According to Proposition \ref{branching}, since we are interested in covers with a 
		single branch point, we have to correct at least two of the angles of these vertices to $\pi$. Without loss of generality, we start with 
		vertex $a_1$ by adding triangle number 1. \quad
	\textbf{Step 6}: Canceling the angle of vertex $b$ by adding triangles number 2, 3 and 4.\quad
	In the last step of the construction, we got 2 broken bold lines that form an even angle of $\frac{\pi}{6}$ between them. Hence, 
	according to Corollary \ref{not_appropriate} it will not yield an appropriate cover.

	Up to this point we have shown that there is no appropriate cover of the first class (as mentioned in page 12). Now, we will examine the 
	possibility of finding an appropriate cover of the second class. According to Proposition \ref{branching}, if the cover 
	$\pi:M_{\overline{P}} \to M_Q$ is branched over a point corresponding to a vertex $b$ in $P$, then $\overline{P}$ has
	one of the following angles: $\frac{3\pi}{4}, \frac{5\pi}{4}, \frac{6\pi}{4}=\frac{3\pi}{2}, \frac{7\pi}{4}$.
	For each of these possibilities, any polygon $Q$ which is tiled by reflections of $\overline{P}$, must have an even angle. 
	Following Corollary \ref{not_appropriate}, it will not yield an appropriate cover.
	Therefore, by Lemma \ref{non-periodic_points}, it remains to check the possibility for an appropriate cover, when the cover 
	$\pi:M_{\overline{P}} \to M_Q$ is branched over the singular point. According to Proposition \ref{branching}, such a branching will 
	occur if and only if there will be an angle $k\cdot \frac{5\pi}{12}$ with $k\nmid 12$. Since for any such $k$, 
	$k \cdot \frac{5\pi}{12} > 2\pi$, there is no such a cover.

	\item [b)] \textbf{\boldmath{$P$} is the acute scalene triangle 
	with angles $\left(\frac{\pi}{5}, \frac{\pi}{3}, \frac{7\pi}{15}\right)$.}\\
	Denote the vertices of $P$ by $a$, $b$ and $c$ as in the proof of Lemma \ref{non-periodic_points}. 
	$M_P$ is a surface of genus 4 with one singular point corresponding to vertex $b$ (see Figure \ref{surfaces_lemma}, surface 4). 
	Therefore, if there exists an appropriate cover, the branching must be over a regular point corresponding to a vertex $a$ or $c$. 
	In these cases, Lemma \ref{fp_of_G_Q} implies that $G_Q$ must be isomorphic to $D_3$ or $D_5$ respectively (the dihedral subgroups 
	of $G_P$ that fix the corresponding points in $M_P$). The first option cannot occur since $G_Q\cong D_3$ implies that all the 
	denominators of the angles in $Q$ are 3, and $5 \cdot \frac{7\pi}{15} > 2\pi$. Therefore we should examine only the second option: 
	The branch point corresponding to a vertex $c$. In that case $G_Q \cong D_5$. Therefore, all the vertices in $Q$ must be of the form 
	$\frac{k\pi}{5}$. In particular, this requirement implies that every angle of a vertex $b$ must be multiplied by 3, and every angle of 
	a vertex $a$ must be cancelled, i.e. multiplied by 3 or 6. ($\star$)

	We will start constructing $Q$ with 3 triangles glued together in vertex $b$. The following steps describe the construction of $Q$
	(see Figure \ref{kesm15_building}). The bold lines indicate exterior sides for the polygon. These bold lines are added when 
	we cannot expand the angle between them.
	\begin{figure}[h!]
	\begin{center}
	\includegraphics[scale=0.58]{kesm15_buildingQ.eps}
	\caption{Steps of the construction of $Q$}
	\label{kesm15_building}
	\end{center}
	\end{figure}

	\textbf{Step 1:} Cancelling the angle of vertex $a$ by adding triangles 1 and 2. We cannot enlarge the angle of vertex $a$
		to $2\pi$ because of the existence of an exterior side in triangle $0$. Hence, we have an exterior side in triangle number 2.\quad
	\textbf{Steps 2 and 3:} The broken lines form an angle of $\frac{k\pi}{15}$, $k\nmid 15$ with one of the bold lines.
		Therefore, these sides must be internal, otherwise, $G_Q$ (the group generated by reflections of the sides of $Q$) will be the same 
		as $G_P$ ($D_{15}$) and not as we required ($D_5$). Moreover, the side touching vertex a is external according to $\star$. \quad
	\textbf{Step 4:} Fixing the angle $\frac{2\pi}{3}$ of vertex $a$ to be $\pi$ by adding triangle number 3. The new bold 
		lines are added since we cannot enlarge the angle of vertex $b$ according to ($\star$).\quad
	\textbf{Steps 5+6:} Repeating step 4 - adding triangles to fix the angles of the vertices $a$ to be $\pi$.

	At that point, the construction of $Q$ is complete, since all of its sides are exterior. According to Propositions \ref{branching} 
	and \ref{commensurable}, $Q$ determines a lattice surface. Therefore, it is not an appropriate cover of $M_P$.

	Up to this point we have shown that there is no appropriate cover of the first class (as mentioned in page 12). 
	Now, we will examine the possibility of finding an appropriate cover of the second class. 

	According to Lemma \ref{non-periodic_points}, the points corresponding to vertices $a$ and $c$ are non-periodic.
	Since the branch points of the cover can arise only from the vertices of $P$, the only periodic point in $M_P$, which $M_{\overline{P}}$ 
	can be branched over, is the singular point corresponding to vertex $c$ in $P$. Following Proposition \ref{branching}, the cover 
	$\pi:M_{\overline{P}} \to M_Q$ will branch over the singular point of $M_P$ if and only if $\overline{P}$ has an angle 
	$k\cdot \frac{7\pi}{15}<2\pi$ with $k\nmid 15$. Therefore, for such an appropriate cover, $\overline{P}$ must have an angle 
	$2\cdot \frac{7\pi}{15}$ or $4\cdot \frac{7\pi}{15}$. Since $4\cdot \frac{7\pi}{15} > \pi$, any polygon $Q$ tiled by $\overline{P}$
	will have this angle as well. In that case, $N_Q=15$, and $G_Q$ will not be isomorphic to $D_5$ as required. Hence, if there is an 
	appropriate cover of the second class, $\overline{P}$ must have the angle $2\cdot \frac{7\pi}{15}$. But, in that case, any polygon 
	$Q$ tiled by $\overline{P}$, will have an angle of the form $k\cdot 2\cdot \frac{7\pi}{15} < 2\pi$, i.e. $Q$ will have an angle
	$2\cdot \frac{7\pi}{15}$ or $4\cdot \frac{7\pi}{15}$. As in the previous case, $N_Q=15$ so it will not yield an appropriate cover.

	\item [c)] \textbf{\boldmath{$P$} is the acute scalene triangle with angles
	$\left(\frac{2\pi}{9}, \frac{\pi}{3}, \frac{4\pi}{9}\right)$.}\\
	Denote the vertices of $P$ by $a$, $b$ and $c$ as in the proof of Lemma \ref{non-periodic_points}. $M_P$ is a surface of 
	genus 3 with 2 singular points corresponding to vertices $a$ and $c$ (see Figure \ref{surfaces_lemma}, surface 3). Therefore, if there 
	is an appropriate cover it must be branched 	over a regular point corresponding to a vertex $b$. By Lemma \ref{fp_of_G_Q}, in that case, 
	$G_Q$ must be isomorphic to $D_3$ (the dihedral subgroup of $G_P$ which fixes the points in $M_P$ corresponding to vertex $b$). 
	This implies that all the vertices in $Q$ must be of the form $\frac{k\pi}{3}$.
	In particular, every angle of a vertex $c$ in $Q$ must be multiplied by 3. For a vertex $a$, we can achieve this form if we multiplied 
	the angle by 3 or 6. According to Proposition \ref{branching}, multiplying by 6 will lead to a branching over a singular point (periodic).
	Hence, every angle of vertex $a$ must be multiplied by 3 in $Q$. ($\star$)

	We will start constructing $Q$ with 3 triangles glued together in vertex $c$. As before, the 2 bold lines indicate external sides 
	(occur when we cannot enlarge the angle at the vertex).The following steps describe the construction of $Q$ (see Figure 
	\ref{kesm9_building}).
	\begin{figure}[h!]
	\begin{center}\qquad
	\includegraphics[scale=0.7]{kesm9_buildingQ.eps}
	\caption{Steps in the construction of Q}
	\label{kesm9_building}
	\end{center}
	\end{figure}

	\textbf{Steps 1 and 2:} The broken lines form an angle of $\frac{k\pi}{9}, k\nmid 9$ with one of the bold lines. 
		Therefore, these sides must be internal. Otherwise, $G_Q$ (the group generated by reflections of the sides of $Q$)
		will be isomorphic to $D_9$, and not isomorphic to $D_3$ as required. Hence, we added triangles number 1, 2, 3, and 4. \quad
	\textbf{Step 3:} According to ($\star$), we added triangle number 5 in order to expand the angle of vertex $a$ to $3\cdot\frac{2\pi}{9}$.
		The new bold lines were added since we could not enlarge the angle  of vertex $c$ (following ($\star$)). \quad
	\textbf{Step 4:} Repeating step 3 - adding triangle number 6 in order to fix the angle of vertex $a$. \quad
	\textbf{Step 5:} In each vertex $a$ we have an angle of $3\cdot\frac{2\pi}{9}$ which we cannot expand according to ($\star$). 
		Therefore, we add the bold lines.

	At that point we have a polygon $Q$ which we cannot enlarge, since all its sides are exterior. According to Propositions 
	\ref{branching} and \ref{commensurable}, $Q$ determines a lattice surface. Therefore, it will not yield an appropriate cover of $M_P$.

	Up to this point we have shown that there is no appropriate cover of the first class (as mentioned in page 12). Now, we 
	will examine the possibility of finding an appropriate cover of the second class.

	According to Lemma \ref{non-periodic_points}, the points corresponding to vertex $b$ are non-periodic. Since the branch points of the cover 
	can arise only from the vertices in $P$, the periodic points in $M_P$, that $M_{\overline{P}}$ can be branched over, are the singular points 
	corresponding to vertices $a$ and $c$.

	($\clubsuit$) Notice that $M_{\overline{P}}$ cannot be branched over the singular point corresponding to vertex $c$. If so, following 
	Proposition \ref{branching}, $\overline{P}$ will have one of the angles $2 \cdot \frac{4\pi}{9}$ or $4 \cdot \frac{4\pi}{9}$.
	In both cases, for any polygon $Q$ tiled by $\overline{P}$, $N_Q=9$ as opposed to the requirement that $N_Q=3$. In particular, according to
	Proposition \ref{branching}, all the angles of the vertices $c$ in $\overline{P}$ must be $\frac{4\pi}{9}$ or $3\cdot \frac{4\pi}{9}$.
	Also, according to Proposition \ref{branching}, since we want $M_{\overline{P}}$ to branch over periodic points in $M_P$, all the 
	vertices in $\overline{P}$, corresponding to the angle $\frac{\pi}{3}$, have to appear with angles $\frac{\pi}{3}$, $\pi$ or $2\pi$.
	
	We will try to construct a suitable $\overline{P}$ for such an appropriate cover. Since we are interested in a polygon $Q$, tiled by 
	$\overline{P}$, such that $M_Q$ is branched over a single non-periodic point, we must have a vertex with angle $\frac{\pi}{3}$ in 
	$\overline{P}$. Hence, we will start the construction of $\overline{P}$ with triangle number 1 with two external sides (as described in 
	Figure \ref{kesm9_overlineP}). The following steps were taken:

	\textbf{Step 1:} The only way to continue is by adding triangle number 2. \quad
	\textbf{Step 2:} According to ($\clubsuit$), each vertex $c$ can appear in $\overline{P}$ with angle $\frac{4\pi}{9}$ or 
		$3\cdot\frac{4\pi}{9}$. Therefore, we add triangle number 3 with an external side. \quad

	For the next step, we will explain why the side of triangle number 2 must be external in $\overline{P}$.
	We have started the construction of $\overline{P}$ with an angle$\frac{\pi}{3}$ in order to multiply this angle to be 
	$\frac{k\pi}{3}$ with $k>1$ and $(k,3)=1$ in $Q$. That is, for having the wanted branching of the cover 
	$\pi:M_Q \to M_{\overline{P}}$. Hence, to find a suitable polygon $Q$, we have to reflect with respect to at least one of the sides 
	of that angle. We cannot reflect with respect to side B (since the angle at vertex $c$ greater than $\pi$). Therefore,
	we must reflect with respect to side $A$.	Recall that all of the angles  in $Q$ must have the form $\frac{k\pi}{3}$. Reflection with 
	respect to the side $A$, multiplies the angle at vertex $a$. If that angle is $k\cdot\frac{2\pi}{9}$ with $k\geq 3$, than according to 
	Proposition \ref{branching}, such a reflection will cause a branching over a singular point in $M_P$. Therefore, we cannot allow 
	expansion of the angle of vertex $a$.

	\textbf{Step 3:} Adding the bold line for an external side in triangle number 2. \quad
	\textbf{Step 4:} Fixing the angle $\frac{2\pi}{3}$ of vertex $b$ to $\pi$ by adding triangle number 4 (following ($\clubsuit$). \quad
	\begin{figure}[h!]
	\begin{center}\qquad
	\includegraphics[scale=0.57]{kesm9_overlineP.eps}
	\caption{Steps in construction of $\overline{P}$}
	\label{kesm9_overlineP}
	\end{center}
	\end{figure}

	According to Propositions \ref{branching} and \ref{commensurable}), the polygon in the last step has the lattice property. 
	We will show that on one hand, this polygon does not yield an appropriate cover, and on the other hand, we cannot enlarge 
	$\overline{P}$ to get an appropriate cover.

	In the last step, the angle of vertex $a$ is $2\cdot\frac{2\pi}{9}$. Since all the angles in $Q$ must be of the form $\frac{k\pi}{3}$, 
	we need to reflect at the sides of this angle twice. Since the angle of vertex $c$ is greater than $\pi$, we can reflect 
	just once. Therefore, this polygon $\overline{P}$ will not yield an appropriate cover.

	On the other hand, as described in Figure \ref{kesm9_contP}, we cannot enlarge $\overline{P}$. If we add triangle number 5, we must add 
	triangle number 6 as well (according to ($\clubsuit$)). In that case, as we explained before, we cannot reflect in the sides of vertex 
	$a$ twice. Hence, it will not yield an appropriate cover. \begin{figure}[h!]
	\begin{center}\qquad
	\includegraphics[scale=0.61]{kesm9_cont_building.eps}
	\caption{Continuing the construction of $\overline{P}$}
	\label{kesm9_contP}
	\end{center}
	\end{figure}

\end{itemize}

\item \textbf{\boldmath{$P$} is an obtuse triangle with angles
	$\left(\frac{\pi}{2n}, \frac{\pi}{n}, \frac{(2n-3)\pi}{2n}\right)$, $n\geq 4$.}
	\begin{itemize}
	\item [a)] \textbf {If \boldmath{$n$} is even:}
		All the points in $M_P$ (see Figure \ref{ward6}) corresponding to vertices in $P$ are fixed points of the rotation by $\pi$. 
		Therefore by Lemma \ref{involution} these are periodic points. Since we want a single non-periodic branch point, there is no 
		appropriate cover.
		\begin{figure}[h!]
		\begin{center}
		\includegraphics[scale=0.37]{ward6.eps}
		\caption{Ward's star for n=6}
		\label{ward6}
		\end{center}
		\end{figure}

	\item [b)] \textbf {If \boldmath{$n$} is odd:}
		Following Remark \ref{observation}, $N_P=2n \Longrightarrow N_Q \mid 2n$. By Lemma \ref{not_appropriate}, if there is an appropriate 
		cover, $N_Q$ must be odd. Hence, since $3\cdot\frac{(2n-3)\pi}{2n} > 2\pi$, all the angles in $Q$ which correspond to the obtuse angle in 
		$P$, must be $2\cdot\frac{(2n-3)\pi}{2n}=\frac{(2n-3)\pi}{n}$. Consequently, $N_Q=n$, and $G_Q\cong D_n$, and each angle
		$2\cdot\frac{(2n-3)\pi}{2n}$ in $Q$ must be delimited by 2 external sides (since we cannot expand it). ($\clubsuit$) 

		According to Lemma \ref{involution}, the center of $M_P$ (see Figure \ref{surfaces_lemma}, surface 5) is a periodic point, 
		since it is a fixed point of the rotation by $\pi$. According to Lemma \ref{non-periodic_points}, the points $c_1$ and 
		$c_2$ (marked in Figure \ref{surfaces_lemma}) are non-periodic points in $M_P$. Therefore, if there is an appropriate cover, 
		it must be branched over one of the points $c_1$ or $c_2$.
		
		($\star$) Notice that, considering Remark \ref{remark_fp_of_G_Q}, since $c_1$ and $c_2$ are not fixed points of the reflections
		with respect to the broken lines in Figure \ref{surfaces_lemma}, these reflections should not be in $G_Q$. Therefore, these sides 
		must be internal in $Q$.

		In the beginning we will examine the first class of appropriate covers (as mentioned in page 12), i.e. a branch cover 
		\text{$\pi:M_Q \to M_P$}, where the branch locus is a single non-periodic point in $M_P$ ($c_1$ or $c_2$).
		
		According to ($\clubsuit$), we will start the construction of $Q$ with an angle $2\cdot\frac{(2n-3)\pi}{2n}$ with two external 
		sides. There are two options for such a beginning, as described in Figure \ref{ward_2_options}.
		\begin{figure}[h!]
		\begin{center}
		\includegraphics[scale=0.65]{ward5_2_options.eps}
		\caption{Two options to start the construction of $Q$}
		\label{ward_2_options}			
		\end{center}
		\end{figure}

		According to ($\star$), the right option cannot be contained in $Q$. Therefore, we will start the construction with the left option, 
		under our requirements, step by step as described in Figure \ref{ward5_stages_Q}.

		\textbf{Step 1:} According to Propositions \ref{branching} and \ref{commensurable}, the polygon we started with has the lattice 
			property. Therefore, we have to enlarge it. Without loss of generality we add triangle number 1. \quad
		\textbf{Step 2:} Following ($\star$), the broken line indicates a side which must be internal in $Q$. Therefore, we add triangle 
			number 2. \quad
		At that point, according to Proposition \ref{branching}, the angles of vertices $a$ and $c_1$ will lead to a branching over
		the points corresponding to the center of $M_P$ and $u$. Since we are interested in branching over a single non-periodic point, 
		we must continue the construction. Without loss of generality we add triangle number 3. Again, following ($\star$), the broken 
		line indicates a side which must be internal in $Q$. Therefore, we add triangle number 4.
		Now, following Proposition \ref{branching}, we have a branching over two different points, $c_1$ and $c_2$ in $M_P$.
		Since both vertices $u$ and $v$ are delimited by 2 external sides, we cannot fix the angle to prevent the branching over one of these
		points. Hence, we cannot find an appropriate cover of the first class.
		\begin{figure}[h!]
		\begin{center}
		\includegraphics[scale=0.7]{ward5_stages_Q.eps}
		\caption{Steps of the construction of $Q$}
		\label{ward5_stages_Q}			
		\end{center}
		\end{figure}
		
		It remains to examine the second class of appropriate covers. We need to construct a polygon $\overline{P}$, such that 
		$M_{\overline{P}}$ is branched only over periodic points in $M_P$, i.e. the center of $M_P$ or the singular point. 
		According to Proposition \ref{branching} the cover will be branched over the singular point if and only if $\overline{P}$ will have 
		an angle $k\cdot\frac{2n-3}{2n}\pi$ with $k\nmid{2n}$. Since $3\cdot\frac{2n-3}{2n}\pi > 2\pi$, there will not be a branching
		over the singular point. Therefore, for such a polygon $\overline{P}$, $M_{\overline{P}}$ must be branched over the center of $M_P$, 
		corresponding to the angle $\frac{\pi}{2n}$ in $P$. Hence, according to Proposition \ref{branching}, $\overline{P}$ must have an angle
		$k\cdot\frac{\pi}{2n}$ with $k\nmid 2n$. Such an angle cannot appear in $\overline{P}$ without a branching over a non-periodic point, 
		as described in Figure \ref{ward5_overlineP}:
		If $k\geq 3$, then $\overline{P}$ contains the left polygon in Figure \ref{ward5_overlineP} (the sides are marked as external in 
		the figure because of formula $\clubsuit$). According to Proposition \ref{branching}, since the angle of vertex $u$ is 
		$2\cdot\frac{\pi}{n}$ with odd $n$, there will be a branching over a non-periodic point. Hence, we have to fix that angle. Since one 
		of the sides of this angle is external, we can fix it only by adding triangle number 1. In that case, we have 2 external sides with 
		an angle greater than $\pi$. Hence, each polygon $Q$ tiled by $\overline{P}$ will have these external sides as well, which 
		contradicts ($\star$). Therefore, the other side of the angle of vertex $u$ must be external as well. In that case, the cover 
		$\pi:M_{\overline{P}} \to M_Q$ is branched over a non-periodic point. Consequently, there is not appropriate cover of the second 
		class neither.
		\begin{figure}[h!]
		\begin{center}
		\includegraphics[scale=0.6]{ward5_overlineP.eps}
		\caption{Steps of building $Q$}
		\label{ward5_overlineP}			
		\end{center}
		\end{figure}

	\end{itemize}

\item \textbf{\boldmath{$P$} is the obtuse triangle with angles 
	$\left(\frac{\pi}{12}, \frac{\pi}{3}, \frac{7\pi}{12}\right)$.}\\
	Here $N_P=12$. Remark \ref{observation} implies that 
	$N_Q \in \{2,3,4,6,12\}$. According to Corollary \ref{not_appropriate}, if there is an appropriate cover $\pi:M_Q \to M_P$, then $N_Q$ must 
	be odd. Consequently, $N_Q$ must be 3. That means that all the angles of vertices in $Q$ are of the form $\frac{k}{3}\pi$. 
	Hence, the angle $\frac{7}{12}\pi$ in $P$ has to be multiplied by $4n$.	Since $4\!\cdot\! \frac{7}{12}\pi > 2\pi$, there is 
	no appropriate cover. 

\item \textbf{\boldmath{$P$} is a L-shaped polygon:}
	All the points in $M_P$ corresponding to vertices in $P$ are fixed points of the rotation by $\pi$. Hence, by Lemma \ref{involution}, 
	these are all periodic points. Since we looking for a cover with a single non-periodic branch point, there is no appropriate cover.

\item \textbf{Bouw and M\"{o}ller examples} (See \cite{BM} for a description):
	\begin{itemize}
		\item 4-gon with angles $\left(\frac{\pi}{n},\frac{\pi}{n},\frac{\pi}{2n},\frac{(4n-5)\pi}{2n}\right)$
				for $n\geq 7$ and odd.
		\item 4-gon with angles $\left(\frac{\pi}{2},\frac{\pi}{n},\frac{\pi}{n},\frac{(3n-4)\pi}{2n}\right)$ 
				for $n\geq 5$ and odd.
	\end{itemize}
	For each of the polygons above, $N_P=2n$. Since $\frac{(4n-5)\pi}{2n}$ and $\frac{(3n-4)\pi}{2n}$ are even
	angles greater than $\pi$, any polygon $Q$ tiled by $P$ (one of the polygons above) must have these angles as well. 
	Therefore, $N_Q$ must be even. Corollary \ref{not_appropriate} implies that there is no appropriate cover for those examples.

\item \textbf{\boldmath{$P$} is a square-tiled polygon:}\\
	Consider the following theorem by Gutkin and Judge \cite{GJ00}: \textit{A surface $M$ is tiled by parallelograms if and only if 
	\ $\Gamma(M)$ is arithmetic}. Since any arithmetic group is a lattice, this theorem proves that all the square-tiled surfaces 
	are lattice surfaces. If $M$ is a square-tiled surface, then every surface $\widetilde{M}$, 
	which covers $M$, is also a square-tiled surface, hence a lattice surface. Therefore, we cannot have an appropriate cover in this case.
\end{enumerate}

}

\end{document}